\newtheorem{theorem}{Theorem}[section]
\newtheorem{lm}[theorem]{Lemma}
\theoremstyle{definition}
\theoremstyle{remark}
\newtheorem{remark}[theorem]{Remark}
\newcommand{\NN}{\mathbb{N}}
\newcommand{\RR}{\mathbb{R}}
\newcommand{\ep}{\varepsilon}
\newcommand{\de}{\delta}
\newcommand{\al}{\alpha}
\newcommand{\si}{\sigma}
\newcommand{\ga}{\gamma}
\newcommand{\Om}{\Omega}
\newcommand{\pa}{\partial}
\newcommand{\RN}{\RR^N}
\newcommand{\ovr}{\overline}
\newcommand{\Ga}{\Gamma}
\newcommand{\La}{\Lambda}
\newcommand{\De}{\Delta}
\def\veps{\varepsilon}
\def\NN{\mathbb N}
\def\SS{\mathbb S}
\def\SSN{\SS^{N-1}}
\def\cA{\mathcal{A}}
\def\cH{\mathcal{H}}
\def\RR{\mathbb R}
\def\SS{{\mathbb S}}
\def\RN{{\mathbb R^N}}
\def\Om{\Omega}
\def\De{\Delta}
\def\Ga{\Gamma}
\def\Si{\Sigma}
\def\al{\alpha}
\def\ga{\gamma}
\def\la{\lambda}
\def\om{\omega}
\def\pa{\partial}
\def\veps{\varepsilon}
\def\de{\delta}
\def\ds{\displaystyle}
\def\ovr{\overline}
\newcommand{\fka}{\frak a}
\newcommand{\fkb}{\frak b}
\newcommand{\fkd}{\frak d}
\newcommand{\fkr}{\frak r}
\newcommand{\fkzero}{\frak 0}
\newcommand{\fkR}{\frak R}
\newcommand{\fkH}{\frak H}
\newcommand{\fkL}{\frak L}
\DeclareMathOperator{\dist}{\mathrm{dist}}
\begin{document}
    \title[]{Symmetry and linear stability in Serrin's overdetermined problem
via the stability of the parallel surface problem}

  \date{}

\author[G. Ciraolo]{Giulio Ciraolo}
\address{Dipartimento di Matematica e
Informatica, Universit\`a di Palermo, Via Archirafi 34, 90123, Italy.}
\email{giulio.ciraolo@unipa.it}
\urladdr{http://www.math.unipa.it/~g.ciraolo/}

\author[R. Magnanini]{Rolando Magnanini}
    \address{Dipartimento di Matematica ed Informatica ``U.~Dini'',
Universit\` a di Firenze, viale Morgagni 67/A, 50134 Firenze, Italy.}
    \email{magnanin@math.unifi.it}
    \urladdr{http://web.math.unifi.it/users/magnanin}


\author[V. Vespri]{Vincenzo Vespri}
    \address{Dipartimento di Matematica ed Informatica ``U.~Dini'',
Universit\` a di Firenze, viale Morgagni 67/A, 50134 Firenze, Italy.}
    \email{vespri@math.unifi.it}
    \urladdr{http://web.math.unifi.it/users/vespri}

    \keywords{Parallel surfaces, Serrin's problem,
overdetermined problems, method of moving planes, stability, stationary surfaces, Harnack's inequality}
    \subjclass{Primary 35B06, 35J05, 35J61; Secondary 35B35, 35B09}
\begin{abstract}
We consider the solution of the problem
$$
-\De u=f(u) \ \mbox{ and } \  u>0 \ \ \mbox{ in } \ \Om, \ \ u=0 \ \mbox{ on } \ \Ga,
$$
where $\Om$ is a bounded domain in $\RR^N$ with boundary $\Ga$ of class $C^{2,\tau}$, $0<\tau<1$, and $f$ is a locally Lipschitz continuous non-linearity. 
Serrin's celebrated symmetry theorem states that, if the normal derivative $u_\nu$ is constant on $\Ga$, then $\Om$ must be a ball. 
\par
In \cite{CMS2}, it has been conjectured that Serrin's theorem may be obtained {\it by stability} in the following way: first, for a solution $u$ 
prove the estimate
$$
r_e-r_i\le C_\de\,[u]_{\Ga^\de}
$$
for some constant $C_\de$ depending on $\de>0$, where $r_e$ and $r_i$ are the radii of a spherical annulus containing $\Ga$, 
$\Ga^\de$ is a surface parallel to $\Ga$ at distance $\de$ and sufficiently close to $\Ga$,
and $[u]_{\Ga^\de}$ is the Lipschitz semi-norm of $u$ on $\Ga^\de$; secondly, if in addition  $u_\nu$ is constant on $\Ga$, show that
$$
[u]_{\Ga^\de}=o(C_\de)\ \mbox{ as } \ \de\to 0^+.
$$
In this paper, we prove that this strategy is successful. 
\par
As a by-product of this method, for $C^{2,\tau}$-regular domains, we also obtain a linear stability estimate for Serrin's symmetry result. Our result is optimal and greatly improves the similar logarithmic-type estimate of \cite{ABR}
and the H\"{o}lder estimate of \cite{CMV} that was restricted to convex domains.
\end{abstract}


\maketitle

\section[Introduction]{Introduction}

 In this paper we establish a connection between two overdetermined boundary value problems,
Serrin's symmetry problem and what we call the {\it parallel surface problem}. As a consequence, 
we obtain optimal stability for the former, thus significantly improving previous results (\cite{ABR}, \cite{CMV}).
\par
Serrin's symmetry problem concerns solutions of elliptic partial differential equations subject to both Dirichlet and Neumann boundary conditions. Let $\Om\subset\RN$ be a bounded domain with sufficiently smooth boundary $\Ga$ (say $C^{2,\tau}$, $0<\tau<1$). As shown in one of his seminal papers, \cite{Se},
if the following problem 
\begin{eqnarray}
&\Delta u + f(u) = 0 \ \mbox{ and } \
u>0 \ \mbox{ in } \ \Om, \ u=0 \ \mbox{ on } \ \Ga,
\label{serrin1}
\\
&u_\nu=\fka \ \mbox{ on } \ \Ga,
\label{serrin2}
\end{eqnarray}
admits a solution for a given positive constant $\fka$, then $\Om$ must be a ball. Here,  $f:[0,+\infty)\to\RR$ is a locally Lipschitz continuous function and $\nu$ is the {\it inward} unit vector field to $\Ga$.
Generalizations of these result
are innumerable and we just mention \cite{BCN}, \cite{BNV}, \cite{GL}, \cite{GNN}, \cite{Re}.

\par
The parallel surface problem concerns solutions of \eqref{serrin1} with a level surface {\it parallel} to $\Ga$,
that is to say the solution $u$ of \eqref{serrin1} is required to be constant at a fixed distance from $\Ga$:
\begin{equation}
\label{parallelcondition}
u=\frak b \ \mbox{ on } \ \Ga^\de.
\end{equation}
Here, 
\begin{equation} \label{Gamma delta}
\Ga^\de=\{ x\in\Om: d_\Om(x)=\de\} \ \ \mbox{ with } \ \ d_\Om(x)=\min_{y\notin\Om} |x-y|, \ x\in\RN,
\end{equation}
is a surface parallel to $\Ga$, and $\fkb$ and $\de$ are positive (sufficiently small) constants.
Under sufficient conditions on $\Ga^\de$, also in this case, if a solution of \eqref{serrin1} and \eqref{parallelcondition} exists, then $\Om$ must be a ball (see \cite{MSaihp, MSmmas, Sh, CMS1, GGS}). In the sequel, we will occasionally refer to problem \eqref{serrin1}, \eqref{parallelcondition} as the {\it parallel surface problem}.
\par
A condition like \eqref{parallelcondition} was considered in \cite{MSaihp, MSmmas} in connection with
{\it time-invariant} level surfaces of a solution $v$ of the 
non-linear equation
$$
v_t=\De\phi(v) \ \mbox{ in } \ \Om\times(0,\infty)
$$
subject to the initial and boundary conditions
$$
v=1 \ \mbox{ on } \ \Om\times\{0\} \ \mbox{ and } \ v=0  \ \mbox{ on } \ \pa\Om\times(0,\infty);
$$
here, $\phi$ is a $C^2$-smooth non-linearity with $\phi(0)=0$ and $\phi'$ bounded
from below and above by two positive constants (hence, we are dealing with a non-degenerate fast-diffusion equaton).
A (spatial) level surface $\Si\subset\Om$ of $v$  is time-invariant
if $v$ is constant on $\Si$ for each fixed time $t>0$.
\par
It is proved in \cite{MSaihp} that for $x\in\ovr{\Om}$
$$
4t\,\int_0^{v(x,t)}\frac{\phi'(\eta)}{1-\eta}\,d\xi\longrightarrow d_\Om(x)^2 \ \mbox{ as } \ t\to 0^+,
$$
uniformly on compact subsets of $\Om$. Hence, if $\Si$ is time-invariant, then it has to be parallel
to $\Ga$ at some distance $\de$.
Also, it is not difficult to show that the function
$$
u(x)=\int_0^\infty \phi(v(x,t))\,dt, \ x\in\ovr{\Om},
$$
satisfies \eqref{serrin1}, with $f(u)=1$ and, being $\Si$ time-invariant, $u$ satisfies \eqref{parallelcondition}, with $\Ga^\de=\Si$, for some positive constant $\fkb$.
As a consequence, $\Om$ is a ball and, in this situation, the time-invariant surface $\Si$ turns out to be
a sphere.
\par
Condition \eqref{parallelcondition} can also be re-interpreted to give a connection to transnormal and
isoparametric functions and surfaces. We recall that, in differential geometry, a function $u$ is {\it transnormal} in $\Om$ if it is a solution of the equation
\begin{equation}
\label{transnormal}
|Du|^2=g(u) \ \mbox{ in } \ \Om,
\end{equation}
for some suitably smooth function $g:\RR\to (0,\infty)$; the level surfaces of $u$ are called
{\it transnormal surfaces}.  A transnormal function that also satisfies the first equation in \eqref{serrin1}
is called an {\it isoparametric function} and its level surfaces are {\it isoparametric surfaces}. Isoparametric surfaces in the euclidean space can only be (portions of) spheres, spherical cylinders or hyperplanes (\cite{L-C}, \cite{Sg});
a list of essential references about transnormal and isoparametric functions and their properties
in other spaces includes \cite{Bo}, \cite{Ca}, \cite{Mi}, \cite{Wa}.
\par
Now, notice that the (viscosity) solution $u$ of \eqref{transnormal} such that $u=0$ on $\Ga$
takes the form $u(x)=h(\dist(x,\pa\Om))$, where $h$ is defined by
$$
\int_0^{h(t)}\frac{ds}{\sqrt{g(s)}}=t, \ \ t\ge 0.
$$
It is then clear that a solution of \eqref{serrin1} satisfies \eqref{parallelcondition} if and only if $u=\fkb$ on $\{x\in\Om: w(x)=h(\de)\}$. Thus, we can claim that {\it if a solution of \eqref{serrin1} has a level surface that is also a level surface of a transnormal function, then $\Om$ must be a ball.}
\par
Both problems \eqref{serrin1}, \eqref{serrin2} and \eqref{serrin1}, \eqref{parallelcondition} have at least one feature in common: the proof of symmetry relies on the {\it method of moving planes}, a refinement, designed by J. Serrin, of a previous idea of V.I. Aleksandrov's \cite{Al}. The evident similarity between the two problems arouses a natural question:
{\it to obtain the symmetry of $\Om$, is condition \eqref{serrin2} weaker or stronger than \eqref{parallelcondition}?}
\par
As noticed in \cite{CMS2} and \cite{CM}, condition
\eqref{parallelcondition} seems to be weaker than \eqref{serrin2}, in the sense clarified hereafter. As \eqref{parallelcondition} does not imply \eqref{serrin2}, the latter can be seen as the limit of a sequence of conditions of type \eqref{parallelcondition} with $\fkb=\fkb_n$ and $\de=\de_n$ and
$\fkb_n$ and $\de_n$ vanishing as $n\to\infty$. As \eqref{serrin2} does not imply \eqref{parallelcondition} either, nonetheless  the oscillation on a surface parallel to $\Ga$ of a solution of \eqref{serrin1}, \eqref{serrin2} becomes smaller than usual, the closer the surface is to $\Ga$.
A way to quantitavely express this fact is to consider the Lipschitz seminorm 
\begin{equation*}
 [u]_{\Ga^\de} =  \sup_{{\substack{x,y \in\Ga^\de, \\ x\neq y}}} \frac{|u(x)-u(y)|}{|x-y|}\,,
\end{equation*}
that controls the oscillation of $u$ on $\Ga^\de$: it is not difficult to show by a Taylor-expansion argument (see the proof of Theorem \ref{thm Serrin by stability}) that,
if $u\in C^{2,\tau}(\ovr{\Om})$, $0<\tau<1$, satisfies \eqref{serrin1}, \eqref{serrin2}, then
\begin{equation}
\label{seminorm-tau}
[u]_{\Ga^\de}=O(\de^{1+\tau})  \ \mbox{ as } \ \de\to 0.
\end{equation}


\par
This remark suggests the possibility that Serrin's symmetry result may be obtained {\it by stability} by the following strategy: (i) for the solution $u$ of \eqref{serrin1} prove that,
for some constant $C_\de$ depending on $\de$,  
an estimate of type
\begin{equation}
\label{estimate}
 r_e-r_i\le C_\de\, [u]_{\Ga^\de}
\end{equation}
holds for any sufficiently small $\de>0$, where $r_e$ and $r_i$ are the radii of a spherical annulus centered at some point $\fkzero$, $\{x\in\RN: r_i<|x-\fkzero|<r_e\}$, containing $\Ga$ --- this means that $\Om$ is nearly a ball, if $u$ does not oscillate too much on $\Ga^\de$; 
(ii)
if in addition  $u$ satisfies \eqref{serrin2}, that is $u_\nu$ is constant on $\Ga$, show that
$$
\ [u]_{\Ga^\de}=o(C_\de)\ \mbox{ as } \ \de\to 0^+.
$$
The spherical symmetry of $\Ga$ then will follow by choosing $\de$ arbitrarily small. 
\par
In \cite{CMS2}, the first two authors of this paper proved an estimate of type \eqref{estimate}. Unfortunately, that estimate is not sufficient for our aims, since the computed constant $C_\de$ blows up exponentially as $\de\to 0^+$. Nevertheless, in the case examined in
\cite{CM}, we showed that our stategy is successful for the very special class of ellipses.
\par
In this paper we shall extend the efficacy of our strategy to the class of $C^{2,\tau}$-smooth domains.
The crucial step in this direction  is Theorem \ref{thm stability dependence on s}, where we considerably improve inequality \eqref{estimate} by
showing that it holds with a constant $C_\de$ that is a $O(\de^{-1})$ as $\de\to 0^+$. Thus, \eqref{seminorm-tau} will imply that $r_i=r_e$, that is $\Om$ is a ball (see Theorem \ref{thm Serrin by stability}). By a little more effort, in Theorem \ref{thm stability by stability} we will prove that
$$
r_e-r_i\le C\,[u_\nu]_\Ga,
$$
for some positive constant $C$. This inequality enhances to optimality two previous results, both
also based on the method of moving planes. In fact, it 
improves the logarithmic stability obtained in \cite{ABR} for $C^{2,\tau}$-regular domains and extends the linear stability obtained for convex domains in \cite{CMV}. We notice that, in our inequality the seminorm $[u_\nu]_\Ga$ replaces the deviation in the $C^1(\Ga)$-norm 
 of the function $u_\nu$ from a given constant,  considered in  \cite{ABR}. Moreover, the inequality also
improves \cite{BNST}[Theorem 1.2], where a H\"older-type estimate was obtained, for the case in which $f(u)=-N$, by means of integral identities.
\par
The outline of the proof of Theorem \ref{thm stability dependence on s} will be recalled in Section 2: it is the same as that of \cite[Theorem 4.1]{CMS2}, that relies on ideas introduced in \cite{ABR}, and the use of {\it Harnack's} and
{\it Carleson's (or the boundary Harnack's) inequalities}. In Section 3 --- the heart of this paper --- by the careful use of refined versions of those inequalities (see \cite{Ba}, \cite{BCN}), we prove the necessary lemmas that   in Section 4 allow us to obtain
our optimal version of \eqref{estimate}. 
Finally, in Section 5, we present our new linear stability estimate for the radial symmetry in Serrin's problem; it implies symmetry for \eqref{serrin1}, \eqref{serrin2}: thus, the new strategy is successful. 


\setcounter{equation}{0}
\setcounter{theorem}{0}

\section{A path to stability: \\
the quantitative method of moving planes} 
\label{sec:preliminaries}
In this Section we introduce some notation and we review the quantitative study of the method of moving planes as carried out in \cite{ABR} and \cite{CMS2} (see also \cite{CV}).

Let $\Om$ be a bounded domain in $\RR^N$ $(N\geq 2)$ and $\Ga$ be its boundary; we shall denote the diameter of $\Om$ by $\fkd_\Om$. The distance $d_\Om$ defined in \eqref{Gamma delta} is always Lipschitz continuous on $\ovr{\Om}$ and of class $C^{2,\tau}$
in a neighborhood of $\Ga$, if this is of class $C^{2,\tau}$, $0<\tau<1$. In fact, under this assumption on $\Ga$, for every $x\in\Ga$ there are balls 
$B\subset\Om$ such that $x\in\pa B$; denote
by $r_x$ the supremum of the radii of such balls and set 
$$
\fkr_\Om=\min_{x\in\pa\Om}r_x.
$$ 
We then denote by $\fkR_\Om$ the number obtained by this procedure, where instead the interior ball $B$ is replaced by an exterior one. 
\par
For $\de>0$, let $\Om^\de$ be the {\it parallel set} (to $\Ga$), i.e.
\begin{equation}\label{Om delta}
\Om^\de= \{x \in \Om:\ d_\Om(x)>\de\} \,.
\end{equation}
We know that if $0\le\de<\fkr_\Om$, then each level surface $\Ga^\de$ of $d_\Om$,  as defined by \eqref{Gamma delta}, is of class $C^{2,\tau}$ and will be referred to as
a {\it parallel surface} (to $\Ga$).




The following notations are useful to carry out the method of moving planes and its quantitative version;
for $\om\in\SSN$ and $\mu\in\RR$, we set:
\begin{equation}
\label{definitions}
\begin{array}{lll}
&\pi_{\mu}=\{ x\in\RN: x\cdot\om=\mu\}\ &\mbox{a hyperplane orthogonal to $\om,$}\\
&\cH_\mu=\{ x\in\RN: x\cdot\om>\mu\}\ &\mbox{the half-space \emph{on the right} of $\pi_\mu$,}\\
&A_{\mu}=A\cap\cH_\mu &\mbox{the right-hand cap of a set $A$},\\
&x'=x-2(x\cdot\om-\mu)\,\om\ &\mbox{the reflection of $x$ in $\pi_{\mu},$}\\
&(A_\mu)'=\{x\in\RN:x'\in A_{\mu}\}\ &\mbox{the reflected cap in $\pi_{\mu}$.}
\end{array}
\end{equation}
In the sequel, we will generally use the simplified notation $A'=(A_\mu)'$ every time in which the dependence on $\mu$ is not important.
\par
Set $\La=\sup\{x\cdot\om: x\in \Om\}$; if $\mu<\La$ is close to $\La$, the reflected cap $(\Om_\mu)'$ is contained in $\Om$ (see \cite{Fr}), and hence we can define the number
\begin{equation}\label{m def}
\la=\inf\{\mu: (\Om_{\tilde\mu})'\subset \Om \mbox{ for all } \tilde\mu\in(\mu,\La)\}.
\end{equation}
Thus, at least one of the following two cases occurs (\cite{Se},\cite{Fr}):
 \begin{enumerate}
\item[(S1)]
$\Om'=(\Om_{\la})'$ is internally tangent to $\pa \Om$ at some
point $p'\in\pa \Om'\setminus\pi_{\la}$, which is the reflection in $\pi_\la$ of a point $p\in \pa \Om_\la\setminus\pi_{\la}$;
\item[(S2)]
$\pi_{\la}$ is orthogonal to $\pa \Om$ at some point $q\in\pa \Om\cap\pi_{\la}$.
\end{enumerate}
In the sequel, $\pi_\la$ and $\Om_\la$ will be referred to as the {\it critical hyperplane} and the {\it critical cap} (in the direction $\om$), respectively. Corresponding to the points $p$ and $q$, we will also consider
the points $p^\de=p+\de\,\nu(p)$ and $q^\de=q+\de\,\nu(q)$ for $0<\de< \fkr_\Om$; notice that
$\Ga^\de=\{p^\de:p\in\Ga\}$. 
\par
Let $\fkr\in (0, \fkr_\Om)$. From now on $G$ will denote the parallel set 
$$
G=\{ x\in\Om: d_\Om(x)>\fkr\}.
$$
In Section 4, we shall choose $\fkr$ appropriately. Also, to simplify notations, by $P$ and $Q$ we shall denote $p^{\fkr}$ and $q^{\fkr}$, respectively --- two points on $\pa G$ that will be frequently used.
\par
Now, the function $w$ defined by
\begin{equation}\label{w def}
w(x)= u(x')-u(x),\quad x\in \Om_\la,
\end{equation}
satisfies
\begin{equation*} \label{eq w semilinear}
\De w+c(x)\,w=0 \  \mbox{ in } \Om_{\la},
\end{equation*}
where for $x\in\Om_{\la}$
\begin{equation*}
\label{defc}
c(x)=\left\{
\begin{array}{lll}
\ds\frac{f(u(x'))-f(u(x))}{u(x^\la)-u(x)} &\mbox{ if } u(x')\not= u(x),\\
\ds 0 &\mbox{ if } u(x') = u(x).
\end{array}
\right.
\end{equation*}
Notice that $c(x)$ is bounded by the Lipschitz constant $\fkL$ of $f$ in the interval
$[0,\max\limits_{\ovr{\Om}}u].$ 
\par
By an argument introduced in \cite[Theorem 2]{Se} and improved in \cite{BNV} (see also \cite{Fr}), 
we can assume that $w\ge 0$ in $\Om_\la$ and hence, by the strong maximum principle
applied to the inequality $\De w-c^-(x)\, w\le 0$ with $c^-(x)=\max[-c(x),0]$, we can suppose that
$$
w>0 \ \mbox{ in } \ \Om_\la.
$$
\par
One ingredient in our estimates of Section 3 will be {\it Harnack's inequality}:  thanks to this result, for
fixed $a\in (0,1)$, $w$ satisfies the inequality
\begin{equation} \label{harnack}
\sup_{B_{ar}} w \leq \frak{H}_a \inf_{B_{ar}} w ,
\end{equation}
for any ball $B_r \subset \Om_\la$ (see \cite[Theorem 8.20]{GT}); the Harnack constant $\frak{H}_a$ can be bounded by the power 
$\sqrt{N} + \sqrt{r \fkL}$ of a constant 
only depending on $N$ and $a$ (see \cite{GT}). For instance, if $c(x) \equiv 0$, by the explicit Poisson's representation formula for harmonic functions, we have that
\begin{equation*}
\sup_{B_{ar}} w \leq \left( \frac{1+a}{1-a} \right)^N \inf_{B_{ar}} w ,
\end{equation*}
for any $B_r \subset \Om_\la$  (see \cite{GT} and \cite{DBGV}).

Now, we review the quantitative study of the method of moving planes established in \cite{CMS2}, partly
based on the work in \cite{ABR}. As already mentioned in the Introduction, the stability of the radial configuration for problem \eqref{serrin1}, \eqref{parallelcondition} is obtained in \eqref{estimate} in terms of the Lipschitz seminorm on parallel surfaces to $\Ga$.

For a fixed direction $\om$ we consider the critical positions and the corresponding points $p$ and $q$, as detailed in (S1) and (S2). As shown in \cite{MSmmas}, the method of moving planes can be applied to $G$ instead of $\Om$, and the tangency points of cases (S1) and (S2) are $P$ and $Q$, 
respectively. It is clear that if an estimate like \eqref{estimate} holds for $G$, then the same holds for $\Om$, since the difference of the radii does not change. 
\par
The procedure to obtain \eqref{estimate} is quite delicate. For the reader's convenience, we give an outline of it, in which we identify $8$ salient steps.

\begin{enumerate}[(i)]
\item
Following the proof of \cite[Theorem 3.3]{CMS2}, we show that the values of $w(P)/\dist(P,\pi_\la)$, in case (S1), and of the partial derivative $w_\om(Q)$, in case (S2), are bounded by some constant times $[u]_{\pa G}$. 
    
\item By Harnack's inequality, the smallness obtained in (i) at the points $P$ and $Q$ propagates to any point in $G_\la$ sufficiently far from $\pa G_\la$ (see \cite[Lemma 3.1]{CMS2}).
    
\item By using Carleson's inequality, the estimation obtained in the previous step extends to any point in the cap $G_\la$ (see \cite[Lemma 3.1]{CMS2}), thus obtaining the inequality
 \begin{equation*}
 \|w\|_{L^\infty(G_\la)} \leq C [u]_{\pa G}.
\end{equation*}
Here, the key remark is that $C$ only depends on $N$, $\fkr$, the diameter and the $C^2$-regularity of $G$, but {\it does not} depend on the particular direction $\om$ chosen.

\item The union of $G\cap\ovr{\cH_\la}$ with its reflection in $\pi_\la$ defines a set $X$, symmetric in the direction $\om$, that approximates $G$, since the smallness of $w$ bounds that of $u$ on $\pa X$.   

\item Since $u$ is the solution of \eqref{serrin1}, then $u(x)$ grows linearly with $d_\Om(x)$, when $x$ moves inside $\Om$ from $\Ga$; this implies that $u$ can not be too small on $\pa G=\Ga^{\fkr}$.

\item By using both steps (iv) and (v), we find that the distance of every point in $\pa X$ from $\pa G$ is not greater than some constant times $[u]_{\pa G}$ (\cite[Lemma 3.4]{CMS2}). This means that $X$ {\it fits well} $G$, in the sense that $X$ contains the parallel set $G^\sigma$ (related to $G$) for some positive (small) number $\sigma$ controlled by  $[u]_{\pa G}$ (\cite[Theorem 3.5]{CMS2}). This fact is what we call a {\it quantitative approximate symmetry} of $G$ in the direction $\om$.

\item An approximate center of symmetry $\fkzero$ is then determined as the intersection of $N$ mutually orthogonal critical hyperplanes. As shown in \cite[Proposition 6]{ABR}, the distance between 
$\fkzero$ and any other critical hyperplane can be uniformly bounded in terms of the parameter $\sigma$ in item (vi) and hence of $[u]_{\pa G}$.

\item The point $\fkzero$ is finally chosen as the center of the spherical annulus $\{x\in\RN: r_i<|x-\fkzero|<r_e\}$ and the estimate \eqref{estimate} follows from \cite[Proposition 7]{ABR}.
\end{enumerate}

Based on this plan, to improve \eqref{estimate}, it is sufficient to work on the estimates in step (ii). This will be done in Section 3, by refining our use of Harnack's and Carleson's inequalities. As a matter of fact, in \cite{CMS2} we merely 
used a standard application of Harnack's inequality, by constructing a Harnack's chain of balls of suitably chosen fixed radius $\fkr$. This strategy only yields an exponential dependence on $\fkr^{-1}$ 
of the constant in \eqref{estimate}. In \cite{CMV}, we improved these estimates by chosing a chain of balls with radii that decay linearly when the balls approach $\Ga$; however, this could be done only  when $\Om$ is convex (or little more) and leads to a H\"{o}lder type dependency on $\fkr^{-1}$ of the constant in \eqref{estimate}.
\par
In Section 3 instead, we use the following plan, that we sketch for the case $\om=e_1$ and $\la=0$;
$p$ and $q$ are the points defined in (S1) and (S2).
\par
We fix $\fkr=\fkr_\Om/4$, so that $G=\Om^{\fkr_\Om/4}$. For any $0<\de<\fkr_\Om/4$, the values of $w(x)/x_1$ at the points $p^\de$ and $P=p^{\fkr}$ can be compared in the following way
\begin{equation*}
    w(P)/P_1 \leq C\,\de^{-1}\, w(p^{\de})/p^{\de}_1,
\end{equation*}
where $C$ is a constant not depending on $\de$. Correspondingly, we prove that
\begin{equation*}
    w_{x_1}(Q) \leq C_G\,\de^{-1}\, w_{x_1}(q^{\de}),
\end{equation*}
where $Q=q^\fkr$. Then, by exploiting steps (i) and (iii), we obtain that 
\begin{equation} \label{w leq seminorm sect2}
 w \leq C_G\,\de^{-1}\, [u]_{\Ga^\de} \ \mbox{ on the maximal cap of $G_\la$,} 
\end{equation}
where the constant $C_G$ is the one obtained in step (iii) by letting $\fkr=\fkr_\Om/4$, and hence it does not depends on $\de$.
\par
Once this work is done, steps (iv)--(viii) can be repeated to find the improved approximate symmetry for the parallel set $G$, which clearly implies that for  $\Om$. We underline the fact that the dependence on $\de$ in \eqref{w leq seminorm sect2} is optimal, as \cite{CM} indicates.

\setcounter{equation}{0}
\setcounter{theorem}{0}

\section{Enhanced stability estimates} 
\label{sec: stability estimates}

In this section, we line up the major changes needed to obtain \eqref{estimate}; they only concern step (ii). The following lemma will be useful in the sequel.

\begin{lm} \label{lemma ball ABR}
Let $p$ and $q$ be the points defined in (S1) and (S2), respectively. 
\par
If $B$ is a ball of radius $\fkr_\Om$, contained in $\Om$ and such that $p$  or $q$ belong to $\pa B$,
then the center of $B$ must belong to $\ovr{\Om}_\la$.
\end{lm}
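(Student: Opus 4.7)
The plan is to treat the two cases (S1) and (S2) separately, after making the preliminary observation that the center $c$ of $B$ must equal $x+\fkr_\Om\,\nu(x)$ for $x\in\{p,q\}$. Indeed, since $B$ is an open ball of radius $\fkr_\Om$ contained in $\Om$ with $x\in\pa B\cap\pa\Om$, the tangent hyperplane of $\pa B$ at $x$ must coincide with that of $\pa\Om$ and the inward directions must agree---otherwise a portion of $\pa B$ near $x$ would cross $\pa\Om$ and $B$ could not lie in $\Om$. This uniquely fixes $c=x+\fkr_\Om\,\nu(x)$.

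For case (S2), the conclusion will be immediate: orthogonality of $\pi_\la$ and $\pa\Om$ at $q$ forces $\om$ to lie in the tangent plane of $\pa\Om$ at $q$, so $\nu(q)\cdot\om=0$. Then $c\cdot\om=q\cdot\om+\fkr_\Om\,\nu(q)\cdot\om=\la$, so $c\in\pi_\la\cap\Om\subset\ovr{\Om}_\la$.

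Case (S1) is the main one, and I would approach it by contradiction. Assume $c\cdot\om<\la$. The decisive step is to compute the distance from $c$ to the reflected tangency point $p'=p-2(p\cdot\om-\la)\om$: expanding and using $|c-p|=\fkr_\Om$,
\begin{equation*}
|c-p'|^2=|c-p|^2+4(p\cdot\om-\la)(c-p)\cdot\om+4(p\cdot\om-\la)^2=\fkr_\Om^2+4(p\cdot\om-\la)(c\cdot\om-\la).
\end{equation*}
Because $p\in\pa\Om_\la\setminus\pi_\la$ gives $p\cdot\om-\la>0$ while the contradiction hypothesis gives $c\cdot\om-\la<0$, the cross term is strictly negative, yielding $|c-p'|<\fkr_\Om$. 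Thus $p'\in B$; but $B$ is an open subset of $\Om$ and $p'\in\pa\Om$, a contradiction. Hence $c\cdot\om\geq\la$, and combined with $c\in\Om$ we conclude $c\in\ovr{\Om}_\la$.

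The main obstacle will be the reflection identity in case (S1): the short computation $|c-p'|^2=\fkr_\Om^2+4(p\cdot\om-\la)(c\cdot\om-\la)$ is what captures the conflict between the maximality condition (forcing tangency of $\pa B$ and $\pa\Om$ at $p$), the moving planes (which produce the reflected boundary point $p'$), and the contradiction hypothesis about the center, in one strict inequality that places $p'$ inside $B$ and hence outside $\Om$. The rest of the argument is essentially formal once this geometric picture is recognized.
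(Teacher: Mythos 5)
Your proof is correct and follows essentially the same route as the paper's: the paper also observes that $p'\in\pa\Om$ cannot lie inside $B\subset\Om$, derives $|c-p'|\ge\fkr_\Om=|c-p|$, and (after normalizing $\om=e_1$, $\la=0$) deduces $c_1\ge 0$; your coordinate-free reflection identity $|c-p'|^2=\fkr_\Om^2+4(p\cdot\om-\la)(c\cdot\om-\la)$ is just a cleaner way to extract the same inequality, phrased as a contradiction. The paper dismisses (S2) as trivial; your explicit computation $c\cdot\om=q\cdot\om+\fkr_\Om\,\nu(q)\cdot\om=\la$ is what that triviality amounts to.
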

\begin{proof}
The assertion is trivial for case (S2). If case (S1) occurs, without loss of generality, we can assume that $\om=e_1$ and $\la=0$. Since (S1) holds, the reflected point $p'$ lies on $\pa \Om$ and cannot fall inside $B$, since $p\in\pa B$ and $B\subset \Om$. 
\par
Thus,  if $c$ is the center of $B$, we have that $|c-p'|\geq \fkr_\Om=|c-p|$ and hence $|c_1+p_1| \geq |c_1-p_1|$, which implies that $c_1 \geq 0$, being $p_1>0$.
\end{proof}

Our first estimate is a quantitative version of Hopf's lemma, that will be useful to treat both occurrences (S1) and (S2).

\begin{lm} \label{lemma 1 semilinear}
Let $B_R=\{ x\in\RN: |x|<R\}$ and, for $p\in \pa B_R$ and
$s\in (0,R)$, set $p^s=p+s\,\nu(p)=(1-s/R)\, p.$
\par
Let $c\in L^\infty(B_R)$ and suppose that $w\in C^0(B_R \cup \{p\}) \cap C^2(B_R)$ satisfies the conditions:
\begin{equation*}
\label{delta w semilinear}
\Delta w + c(x) w = 0 \ \mbox{ and } \
w \geq 0 \ \mbox{ in } B_R.
\end{equation*}
\par
Then, there is a constant $A=A(N, R, \|c\|_{\infty})$\footnote{See the proof for its expression.} such that
\begin{equation}
\label{w 0 leq semilinear}
w(0) \leq A\,s^{-1} w(p^s) \ \mbox{ for any }  \ s \in (0,R/2).
\end{equation}
Moreover, if $w(p)=0$, then
\begin{equation}
\label{w 0 leq semilinear II}
w(0) \leq A\, w_{\nu} (p).
\end{equation}
\end{lm}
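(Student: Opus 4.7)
The plan is to combine interior Harnack with the classical Hopf barrier, sharpened so that the dependence on $s$ is linear rather than exponential. I split $B_R$ into the inner ball $B_{R/2}$, where Harnack's inequality controls $w(0)$ by the boundary values of $w$ on $\partial B_{R/2}$, and the annulus $\mathcal A := B_R \setminus \overline{B_{R/2}}$, where a radial barrier is used to transfer this control down to $p^s$. Note that $p^s\in\mathcal A$ for $s\in(0,R/2)$.

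The Harnack inequality \eqref{harnack}, applied to $w$ on balls inside $B_R$ that cover $\overline{B_{R/2}}$, provides a constant $C_1=C_1(N,R,\|c\|_\infty)$ with $w(0)\le C_1\,m$, where $m:=\inf_{\partial B_{R/2}}w\ge 0$. For the barrier I take the classical Hopf function
$$h(x)=e^{-\alpha |x|^2}-e^{-\alpha R^2},$$
which vanishes on $\partial B_R$ and equals $h_0:=e^{-\alpha R^2/4}-e^{-\alpha R^2}$ on $\partial B_{R/2}$. Writing $c=c^+-c^-$ with $c^{\pm}\ge 0$, the function $w$ satisfies $\Delta w - c^-(x)\,w = -c^+(x)\,w \le 0$, so it is a supersolution of the operator $L':=\Delta - c^-(x)$, which has nonpositive zeroth-order coefficient. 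A direct computation of $\Delta h$, together with $|x|\ge R/2$ in $\mathcal A$ and $c^-\le\|c\|_\infty$, gives
$$\Delta h - c^-(x)\,h \ge \bigl(\alpha^2 R^2 - 2N\alpha - \|c\|_\infty\bigr)\,e^{-\alpha|x|^2},$$
so choosing $\alpha=\alpha(N,R,\|c\|_\infty)$ sufficiently large makes $h$ an $L'$-subsolution on $\mathcal A$.

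Setting $v:=(m/h_0)\,h$, we have $v\le w$ on $\partial \mathcal A$ and $L'v\ge 0\ge L'w$, so the weak maximum principle for $L'$ yields $w\ge v$ in $\mathcal A$. Using $e^t-1\ge t$ for $t\ge 0$ and $s(2R-s)\ge Rs$ for $s\in(0,R/2]$,
$$h(p^s)=e^{-\alpha R^2}\bigl(e^{\alpha s(2R-s)}-1\bigr)\ge \alpha R\,e^{-\alpha R^2}\,s,$$
so $w(p^s)\ge (m/h_0)\,\alpha R\,e^{-\alpha R^2}\,s\ge A^{-1}\,s\,w(0)$ with $A:=C_1\,h_0\,e^{\alpha R^2}/(\alpha R)$; this is \eqref{w 0 leq semilinear}. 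If in addition $w(p)=0$, dividing the previous inequality by $s$ and letting $s\to 0^+$ gives $w_\nu(p)\ge A^{-1}\,w(0)$, which is \eqref{w 0 leq semilinear II}.

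The main obstacle is the calibration of $\alpha$ through the inequality $\alpha^2 R^2 \ge 2N\alpha + \|c\|_\infty$, together with the bookkeeping showing that the resulting $A$ depends only on $N,R,\|c\|_\infty$ (the factor $e^{\alpha R^2}$ in $A$ is harmless since $\alpha$ is determined by those same parameters); once this is in place, the rest is a routine execution of the Harnack-plus-barrier template.
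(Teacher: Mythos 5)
Your proof is correct and follows essentially the same plan as the paper's: control $w(0)$ via interior Harnack on an inner ball, then transfer that control to $p^s$ through a radial barrier on the annulus whose linear vanishing rate at $\partial B_R$ produces the factor $s^{-1}$. The only cosmetic difference is the choice of barrier (exponential $e^{-\alpha|x|^2}-e^{-\alpha R^2}$ with $e^t-1\ge t$ versus the paper's power-type $|x|^{-\alpha}-R^{-\alpha}$ with convexity of $t\mapsto t^{-\alpha}$), which is a standard interchangeable detail in Hopf-lemma arguments.
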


\begin{proof}
We proceed as in the standard proof of Hopf's boundary point lemma.
\par
Notice that $w$ also satisfies
\begin{equation*}
\Delta w - c^-(x) w \leq 0 \ \mbox{ in } \ B_R,
\end{equation*}
where $c^-(x)=\max(-c(x),0)$. Thus, the strong maximum principle implies that $w >0$ in $B_R$ (unless $w\equiv 0$, in which case the conclusion is trivial).
\par
For a fixed $a\in (0,1)$ and some parameter $\al>0$, set
\begin{equation*}
v(x)=\frac{|x|^{-\al}-R^{-\al}}{(a^{-\al}-1) R^{-\al}} \ \mbox{ for } \ a R\le |x|\le R;
\end{equation*}
notice that $v>0$ in $B_R$, $v=0$ on $\pa B_R$ and $v=1$ on $\pa B_{aR}$.
For $a R< |x|< R$ we then compute that
\begin{multline*}
\De v-c^-(x)\,v\ge \frac{\al^2-(N-2)\al -|x|^2 c^-(x)}{(a^{-\al}-1) R^{-\al}}\,|x|^{-\al-2}\\
\ge
\frac{\al^2-(N-2)\al -R^2\| c^-\|}{(a^{-\al}-1) R^{-\al}}\,|x|^{-\al-2}.
\end{multline*}
Hence, we see that
$$
\De v-c^-(x)\,v\ge 0 \quad \textmd{ in } B_R \setminus \overline{B}_{aR},
$$
if we choose
\begin{equation*}
\al =\frac{ N-2+\sqrt{(N-2)^2+4 R^2 \| c^-\|}}{2}.
\end{equation*}
\par
With this choice of $\al$, the function
\begin{equation*}
z=w-\left[\min_{\pa B_{aR}} w\right] v
\end{equation*}
satisfies the inequalities:
\begin{equation*}
\Delta z - c^-(x)\, z \leq 0 \ \textmd{ in } \ B_R \setminus \overline{B}_{a R} \quad \mbox{ and } \quad
z \geq 0 \   \textmd{ on } \ \pa (B_R \setminus B_{a R}).
\end{equation*}
Thus, the maximum principle gives that $z\ge 0$ and hence that
\begin{equation*}
\min_{\pa B_{aR}} w \leq \frac{w(x)}{v(x)},
\end{equation*}
for $x\in B_R \setminus \overline{B}_{a R}$.
\par
Now, choose $x = p^s$ (since we want that $p^s\in B_R \setminus \overline{B}_{a R}$, the constraint $s/R<1-a$ is needed); we thus have that
\begin{equation} \label{1 semilinear}
\min_{\pa B_{aR}} w \leq \frac{a^{-\al}-1}{(1-s/R)^{-\al}-1}\,w(p^s) \le R\,\frac{a^{-\al}-1}{\al}\,
\frac{w(p^s)}{s},
\end{equation}
where the last inequality holds for the convexity of the function $t\mapsto t^{-\al}$.
\par
Harnack's inequality \eqref{harnack} then yields
\begin{equation*}
\label{3 semilinear}
w(0) \leq \sup_{B_{aR}} w \leq \frak{H}_a \inf_{B_{aR}} w \leq R \fkH_a\,\frac{a^{-\al}-1}{\al}\,
\frac{w(p^s)}{s}.
\end{equation*}
Consequently, by chosing $a=1/2$ and setting
$
A=R \frak{H}_{1/2}  (2^{\al}-1)/\al,
$
we readily obtain \eqref{w 0 leq semilinear}.

Finally, if $w(p)=0$, we readily obtain \eqref{w 0 leq semilinear II} from \eqref{w 0 leq semilinear} and by letting $s$ go to zero.
\end{proof}

The following result is crucial to treat the case (S2).

\begin{lm}
\label{lemma 2 semilinear}
Set $B_R^+=\{ x\in B_R: x_1 >0\}$ and $T=\{ x\in\pa B_R^+: x_1=0\}$. For
any point $q\in\pa B_R\cap T$ and $s\in [0,R)$, define
$q^s = q + s \nu(q)=(1-s/R)\,q.$
\par
Let $c\in L^\infty(B_R)$ and suppose $w \in C^2(B_R^+) \cap C^1(B_R^+ \cup T)$ satisfies the conditions:
\begin{equation*}
\Delta w + c(x) w = 0
 \, \mbox{ and } \,  w \geq 0 \ \textmd { in }  \ B_R^+,
\quad w = 0 \ \textmd{ on } \ T.
\end{equation*}\par
Then, there is a constant $A^*=A^*(N, R, \|c\|_{\infty})$ such that
\begin{equation}
\label{bound-wx1}
w_{x_1}(0) \leq A^* s^{-1}\, w_{x_1} (q^s)
\ \mbox{ for any } \ s \in (0,R/2].
\end{equation}
\end{lm}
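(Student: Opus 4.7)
My plan is to reduce the half-ball statement to the full-ball Lemma~\ref{lemma 1 semilinear} by a lifting argument that turns the vanishing Dirichlet condition on the flat face $T$ into a rotational symmetry in higher dimension.

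First I would introduce the auxiliary function
\begin{equation*}
v(x)=\frac{w(x)}{x_1},\ x\in B_R^+,
\end{equation*}
extended to $T$ by $v(0,x')=w_{x_1}(0,x')$. Since $w\ge 0$ in $B_R^+$ and $w=0$ on $T$, Hopf's lemma yields $w_{x_1}\ge 0$ on $T$, so $v$ is nonnegative and continuous on $B_R^+\cup T$. Writing $w=x_1 v$ in $\Delta w+c w=0$ gives the degenerate elliptic equation
\begin{equation*}
\Delta v+\frac{2}{x_1}\,v_{x_1}+c(x)\,v=0 \ \mbox{ in } \ B_R^+,
\end{equation*}
whose singular set is precisely $T$.

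I would then remove the degeneracy by lifting. Setting $x_1=|y|$ with $y\in\RR^3$, define
\begin{equation*}
V(y,x')=v(|y|,x') \ \mbox{ on } \ \{(y,x')\in\RR^3\times\RR^{N-1}: |y|^2+|x'|^2<R^2\},
\end{equation*}
which is the ball of radius $R$ in $\RR^{N+2}$. The $(N+2)$-dimensional Laplacian acts on a function radial in $y$ as $\partial_r^2+(2/r)\partial_r+\Delta_{x'}$, so $V\ge 0$ solves
\begin{equation*}
\Delta V+\tilde c(x)\,V=0, \ \mbox{ with } \ \tilde c(y,x')=c(|y|,x'), \ \|\tilde c\|_\infty=\|c\|_\infty.
\end{equation*}
Now I apply Lemma~\ref{lemma 1 semilinear} in dimension $N+2$ with boundary point $\tilde q=(0,0,0,q_2,\ldots,q_N)\in\partial B_R\subset\RR^{N+2}$, the natural lift of $q$. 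The corresponding interior point $\tilde q^s=(1-s/R)\tilde q$ is the lift of $q^s$. Since $V(0)=v(0)=w_{x_1}(0)$ and $V(\tilde q^s)=v(q^s)=w_{x_1}(q^s)$, the conclusion $V(0)\le A s^{-1} V(\tilde q^s)$ is exactly \eqref{bound-wx1} with $A^*=A(N+2,R,\|c\|_\infty)$.

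The main obstacle will be the regularity of $V$ at the axis $\{y=0\}$, where $V$ is in general only Lipschitz, not $C^2$. However $V$ is a classical solution off this codimension-three set and hence a $W^{1,2}_{\rm loc}$ weak solution on the whole ball, since a codimension-three set is removable for the Laplacian. The strong maximum principle, the barrier construction and Harnack's inequality used in the proof of Lemma~\ref{lemma 1 semilinear} extend to such weak solutions with bounded coefficient; alternatively one may mollify $V$ and pass to the limit. Once this technical point is settled, no further ingredient is needed.
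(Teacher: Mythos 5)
Your proof is correct, and it takes a genuinely different route from the paper's. The paper works directly in the half-ball: it compares $w$ with the explicit barrier $[\,|x|^{-\al}-R^{-\al}]\,x_1$ on the half-annulus $B_R^+\setminus \ovr{B_\rho^+}$ to bound $\min_{\pa B_\rho^+}(w/x_1)$ by $s^{-1}w_{x_1}(q^s)$, transfers this to an interior value $w(\si e_1)$ via Lemma \ref{lemma 1 semilinear} on a tangent ball plus Harnack, and finally bounds $w_{x_1}(0)$ by $w(\si e_1)$ using the boundary-growth estimate \cite[Lemma 2.1]{BCN} together with the Carleson-type inequality \cite[Theorem 1.3]{BCN}. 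Your lifting argument sidesteps the Carleson/boundary-Harnack machinery entirely: writing $w=x_1 v$ converts the Dirichlet condition on $T$ into the Bessel-type operator $\De+\tfrac{2}{x_1}\pa_{x_1}$, and realizing that operator as the Laplacian of $\RR^{N+2}$ acting on functions rotationally symmetric in three of the variables reduces everything to a single application of the full-ball Lemma \ref{lemma 1 semilinear} in dimension $N+2$, with the two normal derivatives becoming plain function values. This is shorter and more self-contained; what you pay for it is the regularity issue along the axis $\{y=0\}$, which the paper avoids by never leaving the half-ball.

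That issue is genuine but fillable, roughly as you say, though not quite as automatic as ``mollify and pass to the limit.'' Under the hypothesis $w\in C^1(B_R^+\cup T)$ one gets $V$ bounded and continuous up to the axis and $|\na V|=o(|y|^{-1})$ there, so $\na V\in L^2_{\rm loc}$; since the axis has codimension $3>2$ in $\RR^{N+2}$ it has vanishing $2$-capacity, and a cut-off argument then shows $V$ is a $W^{1,2}_{\rm loc}$ weak solution across the axis. With $\tilde c\in L^\infty$, Calder\'on--Zygmund theory upgrades this to $V\in W^{2,p}_{\rm loc}$ for every $p<\infty$, which is enough for the strong maximum principle, the barrier comparison, and the Harnack inequality invoked in the proof of Lemma \ref{lemma 1 semilinear}. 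If you spell out the capacity/cut-off step (rather than appealing to ``removability for the Laplacian'' in the abstract) the argument is complete, and the resulting constant $A^*=A(N+2,R,\|c\|_\infty)$ has the claimed dependence.
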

\begin{proof}
As in Lemma \ref{lemma 1 semilinear}, we can assume that $w>0$ in $B_R^+$. 
\par
Inequality \eqref{bound-wx1} will be the result of a chain of estimates: with this goal, we introduce the half-annulus $\cA^+= B_R^+ \setminus \overline{B^+_\rho}$ and the cube
$$
Q_\si=\{(x_1,\dots, x_N)\in\RN: \, 0<x_1<2\si, \,|x_i|<\si,\, i=2,\dots, N\}.
$$
For the moment, we choose $0<\rho<R$ and $0<\si\le R/\sqrt{N+3}$, that is in such a way that $Q_{\si}\subset B_R^+$; the precise value of $\rho$ will be specified later.
\par
The first estimate of our chain is \eqref{up-bound-sup} below; in order to prove it,  we introduce the auxiliary function
$$
v(x)=[|x|^{-\al}- R^{-\al}]\,x_1 \ \mbox{ for } \ x \in \ovr{A^+}.
$$
It is clear that $v>0$ in $\cA^+$ and $v=0$ on $\pa B^+_R$; also, we can choose $\al>0$ so that $\De v-c^-(x)\, v\ge 0$ in $\cA^+$ ($\al=(N+\sqrt{N^2+4 R^2 \|c^-\|_\infty})/2$ will do).
\par
We then consider the function $w/v$ on $\pa B^+_\rho$: it is surely well-defined, positive and continuous in $\pa B^+_\rho\setminus T$; also,  it can  be extended to be a continuous function up to $T\cap\pa B^+_\rho$ by defining it equal to its limiting values
$
w_{x_1}(x)/(\rho^{-\al}-R^{-\al}|)
$
for $x\in T\cap\pa B^+_\rho$. With this settings, $w/v$ also turns out to be positive on the whole $\pa B^+_\rho$ since, on $T\cap\pa B^+_\rho$, it is positive by a standard application of Hopf lemma.
\par
These remarks tell us that the minimum of $w/v$ on $\pa B^+_\rho $
is well-defined and positive, and hence that the function
$$
z=w-\min_{\pa B^+_\rho}(w/v)\,v
$$
satisfies the inequalities
\begin{equation*}
\Delta z - c^-(x)\, z \leq 0 \ \textmd{ in } \ \cA^+ \quad \mbox{ and } \quad
z \geq 0 \   \textmd{ on } \ \pa \cA^+.
\end{equation*}
Thus, by the maximum principle, $z\ge 0$ on $\ovr{\cA^+}$ and hence
$$
\min_{\pa B^+_\rho}(w/v)\le w(x)/v(x) \ \mbox{for every} \ x\in \ovr{\cA^+}.
$$
\par
For $s<R-\rho$, we then can take  $x=q^s+\ep e_1$, with $\ep>0$ so small that $x\in \cA^+$,
take the limit as $\ep\to 0$ and, since $w(q^s)=v(q^s)=0$, obtain the inequality
$$
\min_{\pa B^+_\rho}(w/v)\le \frac{w_{x_1}(q^s)}{v_{x_1}(q^s)}=
\frac{w_{x_1}(q^s)}{(R-s)^{-\al}-R^{-\al}}.
$$
Again, by the convexity of $t\mapsto t^{-\al}$, we find that
\begin{equation*}
\min_{\pa B^+_\rho}(w/v)\le \frac{R^{\al+1}}{\al\,s}\,w_{x_1}(q^s),
\end{equation*}
and hence it holds that
\begin{equation}
\label{up-bound-sup}
\min_{x\in\pa B^+_\rho}\frac{w(x)}{x_1}\le \frac{R^{\al+1}(\rho^{-\al}-R^{-\al})}{\al\,s}\,w_{x_1}(q^s).
\end{equation}
\par
The second estimate \eqref{w 2rho e1 leq} below shows that, up to a constant, the minimum in \eqref{up-bound-sup} can be 
bounded from below by the value of $w$ at the center of the cube $Q_{\si}$. To do this, we let $y\in\pa B^+_\rho$ be a point at which the minimum in \eqref{up-bound-sup}
 is attained and set
\begin{equation*}
\hat y =(0,y_2,\ldots,y_N)\quad\textmd{and}\quad \bar y=(\si,y_2,\ldots,y_N);
\end{equation*}
notice that $\hat y$ and $y$ coincide when $y_1=0$. 
\par
The ball $B_{\si}(\bar y)$ is tangent to $\pa B_R^+\cap T$ at $\hat y$ and we can choose $\rho$ such that $B_{\si}(\bar y)\subset B_R^+$; thus, by applying Lemma \ref{lemma 1 semilinear}  to $B_{\si}(\bar y)$ 
with $p=\hat y$, $p^s=y$ and $\nu=e_1$,
we obtain that
\begin{equation*}
w(\bar y) \leq A\,w(y)/y_1 \ \mbox{ if } \ y_1>0,
\end{equation*}
and
\begin{equation*}
w(\bar y) \leq A\,w_{x_1} (y) \ \mbox{ if } \ y_1=0, \ \mbox{ being $w(\hat y)=0$. }
\end{equation*}
Thus, we have proved that
\begin{equation*}\label{w bar y leq}
w(\bar y) \leq A \min_{x\in\pa B^+_\rho}\frac{w(x)}{x_1}.
\end{equation*}
Moreover, if we also choose $\rho$ such that $B_\rho(\bar y)\subset B_{2\rho}(\bar y) \subset B_R^+$, since the point $\si e_1\in B_\rho(\bar y)$, Harnack's inequality shows that
\begin{equation*}
w (\si e_1) \leq \frak{H}_{1/2}\, w(\bar y),
\end{equation*}
and hence we obtain that
\begin{equation}\label{w 2rho e1 leq}
w(\si e_1) \leq A\,\frak{H}_{1/2}\, \min_{x\in\pa B^+_\rho}\frac{w(x)}{x_1}.
\end{equation}
\par
To conclude the proof, we use two estimates contained in \cite{BCN} (see also \cite{Ba}).
First, after some rescaling, we can apply \cite[Lemma 2.1]{BCN} to the square $Q_{\si/2}$ and obtain that
\begin{equation}
\label{harnack-up-to-boundary}
w(t\si/2\, e_1)\le t \,C_1\,\max_{\ovr{Q}_{\si/2}} w,
\end{equation}
for every $t\in(0,1)$, where $C_1$ is the constant in \cite[Lemma 2.1]{BCN} that, in our case, only depends
on $N$, $\| c\|_\infty$ and $R$ (by means of $\si$). Thus, since $w(0)=0$, taking the limit as $t\to 0^+$ gives that
\begin{equation}
\label{pa w pa x1 leq C1 M}
w_{x_1}(0)\le\frac{2 C_1}{\si}\,\max_{\ovr{Q}_{\si/2}} w.
\end{equation}
\par
Secondly, we consider the cube $Q_{\si}$ and again after some rescaling, we use the Carleson-type estimate \cite[Theorem 1.3]{BCN} to obtain that
\begin{equation}
\label{carleson}
\max_{\ovr{Q}_{\si/2}} w \leq 2^q\,B\,w (\si\, e_1),
\end{equation}
where, in our case, the constants $B$ and $q$ in \cite[Eq.(1.6)]{BCN} again only depend on $N$, $R$ and $\| c\|_\infty$. Thus, by \eqref{pa w pa x1 leq C1 M} we have that
\begin{equation}\label{pa w pa x1 leq C3 w}
w_{x_1}(0) \leq \frac{2^{q+1} B\,C_1}{\si}\, w (\si e_1).
\end{equation}
\par
Therefore, by applying \eqref{pa w pa x1 leq C3 w}, \eqref{w 2rho e1 leq} and
\eqref{up-bound-sup}, inequality \eqref{bound-wx1} holds with
$$
A^*=2^{q+1} A\,\frak{H}_{1/2}\, B\,C_1\,\frac{(R/\rho)^\al-1}{\al}\,\frac{R}{\si} ,
$$
where the constants $\rho$ and $\si$ can be chosen as specified along the proof.
\end{proof}

For the treatment of case (S1), we must pay attention to the fact that the point of tangency $p$ may be very close to $\pi_\la$ and the interior touching ball at $p$ may not be contained in the cap.  For this reason, we need the following lemma which gives a uniform treatment of all cases occurring when (S1) takes place.

\begin{lm} \label{lemma 3 semilinear}
Let $\xi =\xi_1 e_1$ with $\xi_1>0$ and set
\begin{equation*}
 B_R^+(\xi)=\{x \in \RN:|x-\xi|<R, \, x_1>0\}, \ T=\{x \in \RN:|x-\xi|<R, \, x_1=0\}. \footnote{Notice that $T$ may be the empty set.} 
\end{equation*}
 For $p\in \pa B_R^+(\xi) \setminus T$, define $p^s$ 
as in Lemma \ref{lemma 1 semilinear}.
\par
Let $c$ be essentially bounded on $B_R(\xi)$ and suppose that $w\in C^2(B_R^+(\xi)) \cap C^0(B_R^+(\xi) \cup T)$ satisfies
\begin{equation*}
\Delta w + c(x)\,w = 0 \
 \mbox{ and } \ w \geq 0 \textmd{ in }  B_R^+(\xi),
\quad w = 0 \ \textmd{ on } \ T.
\end{equation*}
\par
Then, there is a constant $A^\#=A^\#(N, R, \|c\|_{\infty})$ such that
\begin{equation}
\label{bound-wx1-s1}
w(\xi)/\xi_1 \leq A^\#\,s^{-1}\, w (p^s)/p_1^s
\ \mbox{ for any } \ s \in (0,R/2].
\end{equation}
\end{lm}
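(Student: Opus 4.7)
The proof splits into two cases, according to whether the flat portion $T$ is empty.

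\textbf{Case 1: $T=\emptyset$, equivalently $\xi_1\geq R$.} Here $B_R^+(\xi)=B_R(\xi)$ and, after translating so that the center is at the origin, Lemma~\ref{lemma 1 semilinear} applies and gives $w(\xi)\leq A\,s^{-1}\,w(p^s)$. Since $p\in\partial B_R(\xi)$ and $\xi_1\geq R$, we have $p_1\in[\xi_1-R,\xi_1+R]\subset[0,2\xi_1]$, and since $p_1^s=(1-s/R)p_1+(s/R)\xi_1$ is a convex combination of $p_1$ and $\xi_1$, also $p_1^s\leq 2\xi_1$. Multiplying the preceding bound by $p_1^s/\xi_1\leq 2$ yields \eqref{bound-wx1-s1} with $A^\#=2A$.

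\textbf{Case 2: $T\neq\emptyset$, equivalently $0<\xi_1<R$.} Here $B_R^+(\xi)$ is a genuine half-ball and we reprise the barrier--plus--Harnack--plus--Carleson approach from the proof of Lemma~\ref{lemma 2 semilinear}. Fix $\rho\in(0,R)$ (for concreteness $\rho=R/4$) and introduce the translated barrier
$$
v(x)=\bigl[|x-\xi|^{-\alpha}-R^{-\alpha}\bigr]\,x_1
$$
on the half-annulus $\mathcal{A}^+:=B_R^+(\xi)\setminus\overline{B_\rho^+(\xi)}$. A direct calculation gives
$$
\Delta v=\alpha\,|x-\xi|^{-\alpha-2}\bigl[(\alpha-N)\,x_1+2\,\xi_1\bigr],
$$
so for $\alpha$ large enough in terms of $N,R,\|c^-\|_\infty$ (e.g.\ $\alpha>N$ with $\alpha(\alpha-N)\geq R^2\|c^-\|_\infty$) one has $\Delta v-c^-(x)\,v\geq 0$ in $\mathcal{A}^+$, while $v=0$ on $\partial B_R(\xi)\cup T$ and $v>0$ in $\mathcal{A}^+$. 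Setting $m:=\min_{\partial B_\rho^+(\xi)}(w/v)$, with the minimum extended continuously across $\partial B_\rho^+(\xi)\cap T$ via Hopf-type limits, the maximum principle applied to $z:=w-mv$ in $\mathcal{A}^+$ yields $w\geq mv$ throughout $\mathcal{A}^+$. Evaluating at $p^s\in\mathcal{A}^+$ (which is valid for $s\in(0,R-\rho]$, hence for every $s\in(0,R/2]$) and using the convexity of $t\mapsto t^{-\alpha}$ to estimate $(R-s)^{-\alpha}-R^{-\alpha}\geq\alpha R^{-\alpha-1}s$, we obtain
$$
m\leq\frac{R^{\alpha+1}}{\alpha\,s}\,\frac{w(p^s)}{p_1^s}.
$$

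It remains to prove the reverse inequality
$$
\frac{w(\xi)}{\xi_1}\leq C\,m,
$$
with $C$ depending only on $N$, $R$, $\rho$, $\alpha$, and $\|c\|_\infty$, and \emph{crucially} not on $\xi_1$. Since $v\asymp x_1$ on the spherical portion of $\partial B_\rho^+(\xi)$, this reduces to a boundary-Harnack-type comparison of the quotient $w/x_1$ between the center $\xi$ and the inner half-sphere; both sit at distance $\geq R-\rho$ from $\partial B_R(\xi)$, yet both may be arbitrarily close to the Dirichlet part $T$. We establish it by invoking the sharp Carleson estimate \cite[Theorem~1.3]{BCN} together with the Hopf-type estimate \cite[Lemma~2.1]{BCN} (see also \cite{Ba}) inside a fixed-size cube sitting on $T$ that contains $\xi$ and is connected by a uniform Harnack chain to the spherical part of $\partial B_\rho^+(\xi)$, where $m$ is essentially attained. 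The key point---and the main obstacle of the argument---is that these estimates must be applied uniformly in the position of the cube along the hyperplane $\{x_1=0\}$, so that the resulting constant $C$ does not degenerate as $\xi_1\to 0^+$; fortunately the estimates in \cite{BCN} are translation-invariant along $T$, which is exactly what is needed. Combining the two inequalities produces \eqref{bound-wx1-s1} with $A^\#=C\,R^{\alpha+1}/\alpha$, depending only on $N$, $R$, and $\|c\|_\infty$.
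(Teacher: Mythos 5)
Your Case 1 ($\xi_1\ge R$, reduce to Lemma~\ref{lemma 1 semilinear} on the full ball and control $p_1^s/\xi_1\le 2$) is correct, and it is actually a slightly cleaner handling of large $\xi_1$ than the paper's: the paper treats $\xi_1\ge 2\rho$ via an interior Harnack inequality whose stated constant contains a factor $\xi_1+\rho$, whereas your route makes it manifest that the constant does not grow with $\xi_1$.

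There is, however, a genuine gap in your Case 2. You propose to cover the entire range $0<\xi_1<R$ by the barrier\,+\,Carleson\,+\,Hopf argument inside ``a fixed-size cube sitting on $T$ that contains $\xi$.'' But for the cube $Q_\sigma$ one needs simultaneously $Q_\sigma\subset B_R^+(\xi)$, which forces $\sigma\le R/\sqrt{N+3}$, and $\xi\in\overline Q_{\sigma/2}$ with \cite[Lemma~2.1]{BCN} applied at height $\xi_1=t\,\sigma/2$ with $t\le 1$, which forces $\xi_1\le \sigma/2$. Hence the cube argument is only available for $\xi_1\le R/(2\sqrt{N+3})$. When $\xi_1$ is of order $R$ (e.g.\ $\xi_1=R/2$ and $N$ moderately large), no such cube exists, and the ``translation-invariance along $T$'' you invoke is irrelevant: the obstruction is the \emph{size} of the cube in the $x_1$-direction, not its position along $T$. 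The end that you flagged as dangerous, $\xi_1\to 0^+$, is in fact unproblematic; it is $\xi_1$ comparable to $R$ that breaks the argument.

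The paper avoids this by splitting on the size of $\xi_1$ relative to the annulus radius $\rho$ (chosen small enough that $2\rho\le \sigma/2$), rather than on whether $T$ is empty. For $\xi_1\ge 2\rho$ the ball $B_{2\rho}(\xi)$ lies entirely in $B_R^+(\xi)$, and plain interior Harnack's inequality there yields $w(\xi)/\xi_1\lesssim \min_{\partial B_\rho^+(\xi)} w(x)/x_1$ with a constant independent of $\xi_1$; only for $\xi_1\le 2\rho$ is the cube/Carleson machinery used. Your proof needs the same sub-split inside Case 2 (or, equivalently, should run your Case 1 argument down to the threshold where the cube argument takes over, not merely down to $\xi_1=R$).
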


\begin{proof}
We proceed similarly to the proof of Lemma \ref{lemma 2 semilinear}, with some modifications. 
We shall still use the cube $Q_\si$, but we will instead consider the half annulus $\cA^+=B_R^+(\xi) \setminus \overline{B}_\rho^+(\xi)$.  
\par
Next, we change the auxiliary function $v$: 
$$
v(x)=[|x-\xi|^{-\al}- R^{-\al}]\,x_1;
$$
of course $v=0$ on $\pa B_R^+(\xi)$ and we can still choose $\al$ so large that $v$ satisfies the inequality
$\De v-c^-(x)\, v\ge 0$ in $\ovr{\cA^+}$. Thus, the function\footnote{As in the proof of Lemma \ref{lemma 2 semilinear}, we observe that the function $w/v$ can be extended continuously on $T$ and $w/v>0$ on $\pa B_\rho^+(\xi)$.}
$$
z=w-\min_{\pa B^+_\rho}(w/v)\,v
$$
is such that
$
\Delta z - c^-(x)\, z \leq 0
$ 
in $\cA^+$ and $z \geq 0$ on $\pa \cA^+$.
By the maximum principle, we obtain that $z\ge 0$ on $\ovr{\cA^+}$ and hence
\begin{equation*}
\min_{\pa B^+_\rho(\xi)}(w/v)\le w(x)/v(x) \ \mbox{for every} \ x\in \ovr{\cA^+}.
\end{equation*}
Again, by arguing as in the proof of Lemma \ref{lemma 2 semilinear}, we find that
\begin{equation}\label{up-bound-sup II}
\min_{x\in\pa B^+_\rho(\xi)} \frac{w(x)}{x_1}
\le \frac{R^{\al+1}(\rho^{-\al}-R^{-\al})}{\al\,s}\,\frac{w(p^s)}{p_1^s}.
\end{equation}
\par
Now, to conclude the proof we will treat the cases $\xi_1\geq 2\rho$ and $\xi_1\leq2\rho$, separately.
\par
If $\xi_1\geq 2\rho$, the ball $B_{2\rho}(\xi)$ is contained in $B_R^+(\xi)$ and hence, by Harnack's inequality, we have:
$$
\frac{w(\xi)}{\xi_1}\le \frac{w(\xi)}{2\rho}\le \frac{\frak{H}_{1/2}}{2\rho} \min_{\pa B^+_\rho(\xi)} w\le
\frac{\eta+\rho}{2\rho}\,\frak{H}_{1/2} \min_{x\in\pa B^+_\rho(\xi)}\frac{w(x)}{x_1}.
$$
Thus, \eqref{up-bound-sup II} gives that
\begin{equation*}
\frac{w(\xi)}{\xi_1} \le (\eta+\rho)\,\frak{H}_{1/2} \,\frac{R}{\rho}\,\frac{(R/\rho)^{\al}-1}{4\al\,s}\,\frac{w(p^s)}{p_1^s}.
\end{equation*}
\par
If $\xi_1\leq 2\rho$, we repeat the arguments of the last part of the proof of Lemma \ref{lemma 2 semilinear}, with some slight modification. We take a point $y \in \pa B_\rho^+(\xi)$ at which
the minimum in \eqref{up-bound-sup II} is attained
and set $\bar y=(\si,y_2,\ldots,y_N)$, $\hat y=(0, y_2,\dots,y_N)$. We apply Lemma \ref{lemma 1 semilinear} to $B_\si(\bar y)$, with $p=\hat y$, $p^s=y$ and $\nu=e_1$ and, by inspecting the two cases $y_1>0$ and $y_1=0$, we obtain that
\begin{equation*}
w(\bar y) \leq A \min_{x\in\pa B^+_\rho(\xi)}\frac{w(x)}{x_1}.
\end{equation*}
As before, we choose $\rho$ such that $B_\rho(\bar y)\subset B_{2\rho}(\bar y) \subset B_R^+$ and, since $\si e_1\in B_{\rho}(\bar y)$,  by Harnack's inequality we find that
$
w(\si e_1) \leq \frak{H}_{1/2}\, w(\bar y),
$
and hence
\begin{equation}\label{w 4rho e1 leq}
w(\si e_1) \leq \frak{H}_{1/2} \,A \min_{x\in\pa B^+_\rho(\xi)}\frac{w(x)}{x_1}.
\end{equation}
\par
Now, we apply \cite[Theorem 1.3]{BCN} to the cube $Q_\si$ and obtain \eqref{carleson} as before.
Moreover, again we use
\cite[Lemma 2.1]{BCN} in the cube $Q_{\si/2}$; if $\rho$ is sufficiently small, we have that $\xi_1\le 2\rho\le \si/2$ and hence, applying \eqref{harnack-up-to-boundary} with $t=2\xi_1/\si$ gives that
\begin{equation*}
\frac{ w(\xi)}{\xi_1} \leq C_1 \max_{\overline{Q}_{\si/2}} w;
\end{equation*}
therefore, \eqref{carleson} yields:
\begin{equation*}
\frac{ w(\xi)}{\xi_1} \leq 2^q B\,C_1 w (\si e_1).
\end{equation*}
From \eqref{w 4rho e1 leq} and \eqref{up-bound-sup II}, we conclude in this case, as well. 
The constant $A^\#$ can be computed by suitably choosing $\rho$ and $\si$ according to the 
instructions specified in the proof. 
\end{proof}

\setcounter{equation}{0}
\setcounter{theorem}{0}

\section{Approximate symmetry}
In this section, we assist the reader to adapt the theorems obtained in \cite{CMS2} in order to prove our new result of approximate symmetry for $\Om$. First, we prove the analogue of \cite[Theorem 3.3]{CMS2}, that gives an estimate on the symmetry of $\Om$ in a fixed direction.

\begin{theorem} \label{th:w bounded usn}
Let $\Om\subset\RN$ be a bounded domain with boundary $\Ga$ of class $C^{2,\tau}$, $0<\tau<1$,
and set $G=\Om^{\fkr_\Om/4}$. For a unit vector $\om\in\RN$, let $G_\la$ and $\Om_\la$ be the maximal caps in the direction 
$\om$ for $G$ and $\Om$, respectively. 
\par 
Let $u\in C^{2,\tau}(\ovr{\Om})$ be a solution of \eqref{serrin1} and let $w$ be defined by \eqref{w def}.
\par
Then, for every $\de\in (0,\fkr_\Om/8)$, we have that 
\begin{equation}\label{smallness}
w^{\la}\leq C \de^{-1} [u]_{\Ga^\de} \ \mbox{ on (a connected component of)} \ G_\la.
\end{equation}
Here, $C$ is a constant depending on $N$, $\fkL$, $\fkd_\Om$
and the $C^{2,\tau}$-regularity of $\Ga$.
\end{theorem}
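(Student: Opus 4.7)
The plan is to run steps (i)--(iii) of the outline in Section \ref{sec:preliminaries}, with the Harnack chain used in \cite{CMS2} replaced by the sharper one-sided estimates of Lemmas \ref{lemma 1 semilinear}, \ref{lemma 2 semilinear} and \ref{lemma 3 semilinear}, which are tailored to yield the $\de^{-1}$ blow-up. Without loss of generality I take $\om=e_1$, $\la=0$, and (as recalled in Section \ref{sec:preliminaries}) I may assume $w>0$ in $\Om_\la$. Let $p$, $q$ be the tangency points of cases (S1), (S2).

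\emph{Case (S1).} Let $B_{\fkr_\Om}(\xi)\subset\Om$ be an interior touching ball at $p$. By Lemma \ref{lemma ball ABR}, $\xi_1\ge 0$, and I may assume $\xi_1>0$ (otherwise $w(\xi)=0$). Then $B_{\fkr_\Om}^+(\xi)\subset\Om_\la$, $w$ vanishes on $B_{\fkr_\Om}(\xi)\cap\pi_\la$, and $|c|\le\fkL$ in $\Om_\la$; Lemma \ref{lemma 3 semilinear} with $R=\fkr_\Om$ and $s=\de$ therefore gives
$$
w(\xi)/\xi_1\le A^\#\,\de^{-1}\,w(p^\de)/p_1^\de.
$$
The key observation is that in case (S1) the reflected point $p'$ also lies on $\Ga$, and equality of the tangent planes to $\Ga$ at $p$ and $p'$ forces $\nu(p')=(\nu(p))'$; hence $(p^\de)'=(p')^\de\in\Ga^\de$, so
$$
w(p^\de)=u((p')^\de)-u(p^\de)\le [u]_{\Ga^\de}\,|(p^\de)'-p^\de|=2\,p_1^\de\,[u]_{\Ga^\de}.
$$
Combining these two inequalities gives $w(\xi)\le 2A^\#\,\fkd_\Om\,\de^{-1}[u]_{\Ga^\de}$.

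\emph{Case (S2).} This is parallel, using Lemma \ref{lemma 2 semilinear}: since $\nu(q)\perp e_1$, the centre $\xi=q+\fkr_\Om\nu(q)$ lies on $\pi_\la$, and after translating $\xi$ to the origin Lemma \ref{lemma 2 semilinear} yields $w_{x_1}(\xi)\le A^*\,\de^{-1}w_{x_1}(q^\de)$. Now $q^\de\in\pi_\la$, so $w\equiv 0$ on the horizontal line through $q^\de$ contained in $\pi_\la$, and a first-order Taylor expansion in $x_1$ gives $w_{x_1}(q^\de)=-2u_{x_1}(q^\de)$; since $\nu(q^\de)=\nu(q)\perp e_1$, the vector $e_1$ is tangent to $\Ga^\de$ at $q^\de$, and therefore $|u_{x_1}(q^\de)|\le [u]_{\Ga^\de}$, whence $|w_{x_1}(\xi)|\le 2A^*\,\de^{-1}[u]_{\Ga^\de}$.

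Finally, these pointwise estimates are extended to the entire connected component of $G_\la$ containing $\xi$ by the same Harnack-plus-Carleson propagation used in \cite[Lemma 3.1]{CMS2}: for case (S1) a Harnack chain starting at $\xi$, and for case (S2) a barrier/Hopf comparison that converts the bound on $w_{x_1}(\xi)$ into a bound on $w$ at a point slightly inside $\Om_\la$, both followed by a boundary Harnack estimate up to $\pa G_\la\setminus\pi_\la$. The length of the chain, the radii involved and the Carleson constant depend only on $N$, $\fkL$, $\fkd_\Om$ and the $C^{2,\tau}$-regularity of $\Ga$, so no further $\de$-dependence enters. The main technical point over \cite{CMS2} is precisely the localisation of the initial estimate in a $\de$-thin neighbourhood of $\Ga^\de$: this is what Lemmas \ref{lemma 2 semilinear}--\ref{lemma 3 semilinear} are designed to do, and combining them with the reflection identity $\nu(p')=(\nu(p))'$ is what produces the decisive $\de^{-1}$ factor in place of the exponential blow-up of \cite{CMS2}.
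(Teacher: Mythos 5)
Your proof follows the same overall strategy as the paper (Lemma \ref{lemma ball ABR} to place the interior touching ball in the right cap; Lemmas \ref{lemma 2 semilinear}--\ref{lemma 3 semilinear} to pull an interior bound out to $\Ga^\de$ with the crucial $\de^{-1}$ factor; the CMS2 Harnack-plus-Carleson propagation to cover $G_\la$), but with two genuine local differences, one of which is arguably an improvement and one of which is a small lacuna.

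The improvement is your treatment of $w(p^\de)$ in case (S1). You observe that internal tangency of $\Om'=(\Om_\la)'$ and $\Om$ at $p'$ forces $\nu(p')=(\nu(p))'$, hence $(p^\de)'=(p')^\de\in\Ga^\de$, and then you bound $w(p^\de)$ directly by $[u]_{\Ga^\de}\,|(p^\de)'-p^\de|=2p_1^\de[u]_{\Ga^\de}$. The paper's proof also uses $(p^\de)'\in\Ga^\de$, but then splits into the cases $p_1^\de\gtrless\fkr_\Om/2$ and, in the second case, projects the chord onto $\Ga^\de$ and invokes the mean value theorem with the tangential gradient, picking up an extra constant $\widehat C$ depending on the regularity of $\Ga$. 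Since $[u]_{\Ga^\de}$ is defined as the Euclidean Lipschitz seminorm, your direct argument gives a cleaner constant and renders the case distinction unnecessary.

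The lacuna concerns the propagation and the choice of starting point. You apply Lemma \ref{lemma 3 semilinear} with $R=\fkr_\Om$ at the centre $\xi$ of the touching ball, whereas the paper applies it with $R=\fkr_\Om/4$ at $\xi=P=p^{\fkr_\Om/4}\in\pa G$, after having already propagated via \cite[Lemma~4.2]{CMS2} to get $\sup_{\ovr{G}_\la}w\le C\,w(P)/P_1$. Two consequences of your choice deserve attention. First, when $\xi_1$ is small, a naked Harnack chain started at $\xi$ degenerates; the propagation must be kept in the form $\sup_{G_\la}w\le C\,w(\xi)/\xi_1$ (the \emph{ratio}, not $w(\xi)$ itself), exactly as in the cited CMS2 lemma, and you should combine that with $w(\xi)/\xi_1\le 2A^\#\de^{-1}[u]_{\Ga^\de}$ \emph{before} multiplying by $\xi_1\le\fkd_\Om$ -- the intermediate line $w(\xi)\le 2A^\#\fkd_\Om\de^{-1}[u]_{\Ga^\de}$ is a red herring and does not feed usefully into the propagation. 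Second, your dismissal of the degenerate case $\xi_1=0$ with ``otherwise $w(\xi)=0$'' is not enough: knowing $w(\xi)=0$ says nothing about $\sup_{G_\la}w$. The paper sidesteps this by using $P$, for which $P_1=\tfrac34 p_1+\tfrac14\xi_1\ge\tfrac34 p_1>0$ always, so the ratio $w(P)/P_1$ is well defined in every subcase. If you insist on using $\xi$, you would need to invoke Lemma \ref{lemma 2 semilinear} instead when $\xi_1=0$ (the centre then lies on $\pi_\la$ and the half-ball is flat), or simply switch to the paper's choice of base point. With that adjustment the proof is sound and in all essentials the same as the paper's.
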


\begin{proof}
We point out that $G$ is connected. 
Also, as already done before, we can assume that $\om=e_1$ and  $\la=0$.
\par
Let $p$ and $q$ be the points defined in (S1) and (S2), respectively; $P$ and $Q$ are the points in $\pa G$ already defined.
\par
In what follows, we chose to still denote by $\Om_\la$ and $G_\la$ the connected components of the maximal caps $\Om_\la$ and $G_\la$ that intersect $B_{\fkr_\Om/4}(P)$, if case (S1) occurs, 
and the connected components of $\Om_\la$ and $G_\la$ that intersect $B_{\fkr_\Om/4}(Q)$, if case (S2) occurs.
\par
Lemma \ref{lemma ball ABR} ensures that the interior ball of radius $\fkr_\Om$ touching $\pa \Om$ at $p$ or $q$ has its center in $\overline{\Om}_\la$; hence, $P\in \Om_\la$ and $Q\in \pa \Om_\la \cap \pi_\la$. We then apply \cite[Lemma 4.2]{CMS2} with the following settings: $D_1=G_\la$, $D_2=\Om_\la$, $R=\fkr_\Om/4$, and $z=P$, if case (S1) occurs, and $z=Q$, if case (S2) occurs. Thus, we find that
\begin{equation} \label{w leq in Gm I}
w(x) \leq C\,w(P)/P_1 \ \mbox{ for } \ x \in\ovr{G}_\la,
\end{equation}
and
\begin{equation} \label{w leq in Gm II}
w(x) \leq C\,w_{ x_1}(Q) \ \mbox{ for } \  x \in \ovr{G}_\la,
\end{equation}
respectively. 
Here, the constant $C$ depends only on $N, \fkr_\Om,  \fkL$ and $\fkd_\Om$.

If (S1) occurs, we apply Lemma \ref{lemma 3 semilinear} by letting $R=\fkr_\Om/4$ and $\xi=P$ (this is always possible after a translation in a direction orthogonal to $e_1$), and from \eqref{w leq in Gm I} we  obtain that
\begin{equation} \label{w leq in Gm I II}
w(x) \leq  C\,A^\# \de^{-1} w(p^\de)/p_1^\de \ \mbox{ for } \  x \in \ovr{G}_\la,
\end{equation}
for any $\de\in (0,\fkr_\Om/8)$.
\par
If (S2) occurs, we apply instead Lemma \ref{lemma 2 semilinear} (with $\xi = 0$ and $R=\fkr_\Om/4$) and \eqref{w leq in Gm II}: we find that
\begin{equation} \label{w leq in Gm II II}
w(x) \leq C\,A^* \de^{-1} w_{x_1}(q^\de) \ \mbox{ for } \ x \in \ovr{G}_\la,
\end{equation}
for any $\de \in (0,\fkr_\Om/4)$. 

The rest of the proof runs similarly to that of \cite[Theorem 3.3]{CMS2}, where the estimates of \cite[Lemma 3.2]{CMS2} should be replaced by \eqref{w leq in Gm I II} and \eqref{w leq in Gm II II}. For the reader's convenience, we give a sketch of the proof with the usual settings ($\om=e_1$ and $\la=0$). In particular, we show how to relate $w(p^\de)/p_1^\de$ and $w_{x_1} (q^\de)$ to $[u]_{\Ga_\de}$, which is the main argument of the proof. 

Let us assume that case (S1) occurs. If $p_1^\de \geq \fkr_\Om/2$, since $p^\de$ and its reflection $(p^\de)'$ about $\pi_\la$ lie on $\Ga^\de$, then
\begin{equation*}
w(p^\de) = u((p^\de)')-u(p^\de) \leq \fkd_\Om\,[u]_{\Ga^\de},
\end{equation*}
and hence we easily obtain that
\begin{equation} \label{estim I}
w(p^\de)/p_1^\de\leq 2 \fkd_\Om \fkr_\Om^{-1}\, [u]_{\Ga^\de}.
\end{equation}
If $p_1^\de < \fkr_\Om/2$, then $|p^\de-(p^\de)'|<\fkr_\Om$, then every point of the segment joining $(p^\de)'$ to $p^\de$ is at a distance not greater than $\fkr_\Om$ from some connected component of $\Ga^\de$. The curve $\ga$ obtained by projecting that segment on that component has length bounded by $\widehat C\,|p^\de-(p^\de)'|$, where $\widehat C$ is a constant depending on $\fkr_\Om$ and the regularity of $\Ga^\de$ (and hence on the regularity of $\Ga$ since $\de< \fkr_\Om/8$). 
An application of the mean value theorem to the restriction of $u$ to $\ga$ gives that $u((p^\de)')-u(p^\de)$ can be estimated by the length of $\ga$ times the maximum of the tangential gradient of $u$ on $\Ga^\de$. Thus,
\begin{equation}\label{estim II}
w(p^\de) \leq 2\,\widehat C\, p_1^\de \,[u]_{\Ga^\de}.
\end{equation}
Therefore, \eqref{estim I} and \eqref{estim II} yield the conclusion, if case (S1) is in force.
\par
Case (S2) is simpler. Since $e_1$ belongs to the tangent hyperplane to $\Ga_\de$ at $q^\de$, we readily obtain \eqref{smallness}.
\end{proof}

As outlined in Section \ref{sec:preliminaries}, Theorem \ref{th:w bounded usn} completes steps (i)-(iii) and leads to stability bounds for the symmetry in one direction. Now, we complete steps (iv)-(viii).
\par
As described in steps (iv) and (vi), we define a symmetric open set $X$ and show that $G$ is almost equal to $X$. In order to do that, we need a priori bounds on $u$ from below in terms of the distance function from $\pa G$, as specified in (v). As observed in \cite{ABR} and \cite{CMS2}, such a bound requires a positive lower bound for $u_\nu$ on $\Ga$,
$$
u_\nu\ge\frak c_{\frak 0} \ \mbox{ on } \ \Ga.
$$
If $f(0)>0$, this is guaranteed by Hopf lemma.
If $f(0)\le 0$ instead, such a bound must be introduced as an assumption, as 
it can be realized by considering any (positive) multiple of the first Dirichlet eigenfunction $\phi_1$ for $-\De$. In fact, for any $n\in\NN$ the function $\phi_1/n$ satisfies \eqref{serrin1} with $f(u)=\la_1\,u$, being $\la_1$ the first Dirichlet eigenvalue, and it is clear that,
although $(\phi_1/n)_\nu\to 0$ on $\Ga$ as $n\to\infty$, one cannot expect to derive any
information on the shape of $\Om$. 

The final stability result, step (viii), is obtained by defining an approximate center of symmetry $\fkzero$ as the intersection of $N$ orthogonal hyperplanes as described in step (vii) (see also \cite[Proof of Theorem 1.1]{CMS2}). 
\par
We can now conclude this section with our improved stability estimate on the symmetry of $\Om$.

\begin{theorem} 
\label{thm stability dependence on s}
Let $\Om\subset\RN$ be a bounded domain with boundary $\Ga$ of class $C^{2,\tau}$, $0<\tau<1$.
Let $u\in C^{2,\tau}(\ovr{\Om})$ be a solution of \eqref{serrin1}.
\par 
There exist constants $\veps, C>0$ and $\de_0 \in (0,\fkr_\Om/4)$ such that, if
\begin{equation} 
\label{seminorm-ep}
[u]_{\Ga^{\de_0}} \leq \veps,
\end{equation}
then there are two concentric balls $B_{r_i}$ and $B_{r_e}$ such that
\begin{equation} \label{Bri-Omega-Bre}
B_{r_i}\subset \Om \subset B_{r_e}
\end{equation}
and
\begin{equation} \label{stability t}
r_e-r_i\le C\,\de^{-1}\, [u]_{\Ga^{\de}},
\end{equation}
for any $\de \in (0, \de_0]$.

The constants $\veps$ and $C$ only depend on $N$, $\fkr_\Om$, $\fkd_\Om$, $\fkL$, $\frak c_{\frak 0}$,  $\max_{\ovr\Om} u$ and the $C^{2,\tau}$-regularity
of $\Ga$.
\end{theorem}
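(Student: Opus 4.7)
The plan is to assemble Theorem~\ref{thm stability dependence on s} from Theorem~\ref{th:w bounded usn} by running steps (iv)--(viii) of the scheme recalled in Section~\ref{sec:preliminaries}, with the improved directional bound \eqref{smallness} replacing the analogous but weaker bound used in \cite{CMS2}. Throughout, I work with the parallel set $G=\Om^{\fkr_\Om/4}$; since a spherical-annulus estimate for $G$ immediately gives one for $\Om$ (the radii differ by the fixed number $\fkr_\Om/4$), it suffices to produce such an estimate for $G$.

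Fix a direction $\om\in\SSN$ and let $\pi_\la$ be the corresponding critical hyperplane for $G$, with $w$ defined by \eqref{w def} on $G_\la$. Define the symmetrized set
$$
X_\om \;=\; (G\cap\ovr{\cH_\la})\cup (G\cap\ovr{\cH_\la})',
$$
which is symmetric with respect to $\pi_\la$. The first task is to translate the smallness of $w$ on $G_\la$ given by Theorem~\ref{th:w bounded usn} into a quantitative Hausdorff-type proximity between $\pa X_\om$ and $\pa G$. For this, observe that on $\pa X_\om\cap\Om$, a boundary point of $X_\om$ is reached by reflection of a boundary point of $G$, so $u$ must be small there: more precisely, since $u$ vanishes on $\pa G$ and $w=u\circ\mbox{refl}-u$, the value of $u$ at points of $\pa X_\om$ is pointwise controlled by $\|w\|_{L^\infty(G_\la)}\le C\,\de^{-1}[u]_{\Ga^\de}$.

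The second task is to convert smallness of $u$ on $\pa X_\om$ into smallness of the distance to $\pa G$. This is the role of step (v): the hypothesis $u_\nu\ge\fkc_{\fkzero}>0$ on $\Ga$ (granted by Hopf's lemma if $f(0)>0$ and assumed otherwise) together with the $C^{2,\tau}$ regularity produces a linear lower bound $u(x)\ge c_0\,d_G(x)$ in a fixed neighborhood of $\pa G$, with $c_0>0$ depending only on the listed parameters. Combining this with the bound on $u|_{\pa X_\om}$ yields
$$
d_G(x)\;\le\; C'\,\de^{-1}\,[u]_{\Ga^\de}\quad\mbox{for every }x\in\pa X_\om,
$$
i.e.\ $X_\om$ contains the parallel set $G^{\si_\om}$ with $\si_\om\le C'\de^{-1}[u]_{\Ga^\de}$. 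This is exactly the quantitative approximate symmetry in the direction $\om$ of step (vi), with the improved linear dependence on $\de^{-1}$ inherited from Theorem~\ref{th:w bounded usn}. The smallness assumption \eqref{seminorm-ep} is used precisely here to ensure that $\si_\om$ stays in the regime where the neighborhood of $\pa G$ where the lower bound on $u$ holds is not exhausted, so the argument is not vacuous.

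Finally, apply the above in $N$ mutually orthogonal directions $e_1,\dots,e_N$: the intersection of the corresponding critical hyperplanes defines a point $\fkzero$. Following verbatim \cite[Proposition~6]{ABR}, the distance of $\fkzero$ from \emph{any} other critical hyperplane is bounded by a constant times $\max_i \si_{e_i}\le C'\de^{-1}[u]_{\Ga^\de}$. Then \cite[Proposition~7]{ABR} (an elementary geometric argument using that $G$ contains an approximate reflection about every hyperplane through $\fkzero$) yields two concentric balls $B_{r_i}\subset G\subset B_{r_e}$ centered at $\fkzero$ with $r_e-r_i\le C''\de^{-1}[u]_{\Ga^\de}$; transferring back to $\Om$ and adjusting radii gives \eqref{Bri-Omega-Bre}--\eqref{stability t}. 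The main obstacle is really the book-keeping in step (vi): one must verify that the constant obtained in the lower bound $u\ge c_0 d_G$ and the constant in Theorem~\ref{th:w bounded usn} combine so that the final dependence on $\de$ remains $O(\de^{-1})$ uniformly in the direction $\om$; this is automatic because the constant in \eqref{smallness} is direction-independent, and $c_0$ depends only on the data in the statement.
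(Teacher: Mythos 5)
Your overall plan is exactly the one the paper uses: the proof of Theorem~\ref{thm stability dependence on s} there consists of the single sentence ``repeat the proof of \cite[Theorem 4.2]{CMS2} with the directional bound replaced by \eqref{smallness}'', and your write-up unpacks precisely steps (iv)--(viii) of Section~\ref{sec:preliminaries}. So there is no divergence of strategy. However, there is a genuine flaw in your rendering of steps (iv)--(vi).

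You assert ``since $u$ vanishes on $\pa G$ and $w=u\circ\mbox{refl}-u$, the value of $u$ at points of $\pa X_\om$ is pointwise controlled by $\|w\|_{L^\infty(G_\la)}$.'' This is false: $u$ vanishes on $\Ga=\pa\Om$, not on $\pa G=\Ga^{\fkr_\Om/4}$, which is an interior parallel surface where $u$ is bounded \emph{away} from zero (indeed $u\ge\frak c_{\frak 0}\,\fkr_\Om/4$ there, up to constants). Consequently, for $x'\in\pa X_\om\cap G$ the reflection of $x\in\pa G$, one has $u(x')=u(x)+w(x)$, which is of order one, not of order $\|w\|_\infty$. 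Your subsequent ``linear lower bound $u(x)\ge c_0\,d_G(x)$ in a fixed neighborhood of $\pa G$'' is then trivially true but useless: it cannot give $d_G(x')\lesssim\|w\|_\infty$ from a bound on $u(x')$, because $u(x')$ is not small. What is actually small is $w(x)=u(x')-u(x)$, and converting this to a bound on $\dist(x',\pa G)$ is exactly the content of \cite[Lemma 3.4 and Theorem 3.5]{CMS2}; it uses the vanishing of $u$ on $\Ga$ (not on $\pa G$) together with the bound $u\ge c_0\,d_\Om$ near $\Ga$ coming from $u_\nu\ge\frak c_{\frak 0}$, and is more delicate than a pointwise comparison on $\pa X_\om$. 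As written, your argument has a gap precisely in the place where the paper leans on \cite{CMS2}.

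The remaining bookkeeping (directions $e_1,\dots,e_N$, the approximate center $\fkzero$ via \cite[Propositions 6--7]{ABR}, transferring the annulus from $G$ back to $\Om$, and the role of the smallness hypothesis \eqref{seminorm-ep} in keeping the argument non-vacuous) matches the paper. You should also make explicit, as the paper does, that the required $\de_0$ exists because $[u]_{\Ga^\de}\to 0$ as $\de\to 0^+$ while the threshold $\veps$ is independent of $\de$; this is where the choice of $\de_0$ is actually made.
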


\begin{proof}
Thanks to Theorem \ref{th:w bounded usn}, we can repeat the argument of the proof of \cite[Theorem 4.2]{CMS2} in which we replace formula \cite[(3.15)]{CMS2} by \eqref{smallness}. Hence, there exists two concentric balls $B_{r^*_i}$ and $B_{r^*_e}$ and two constants $\ep$ (independent of $\de$) such that 
$$
B_{r^*_i}\subset G \subset B_{r^*_e} \ \mbox{ and } \ r^*_e-r^*_i\le C\,\de^{-1}\, [u]_{\Ga^{\de}},
$$
if $[u]_{\Ga^\de}\leq \ep$.
Moreover, since $\ep$ does not depend on $\de$ and
\begin{equation*}
    \lim_{\de\to 0^+} [u]_{\Ga_\de} = 0,
\end{equation*}
we can find $\de_0 \in (0,\fkr_\Om)$ such that $[u]_{\Ga^\de}\leq \ep$ for any $\de\in (0, \de_0)$.
To complete the proof, we observe that \eqref{Bri-Omega-Bre} and \eqref{stability t}
hold with $r_i=r^*_i+\fkr_\Om/4$ and $r_e=r^*_e+\fkr_\Om/4$.
\end{proof}

\setcounter{equation}{0}
\setcounter{theorem}{0}

\section{Serrin's problem} \label{section Serrin via stability}
In this section, we give a new proof of Serrin's symmetry result and a corresponding stability estimate for spherical symmetry by using the improved stability inequality for the parallel surface problem \eqref{serrin1}, \eqref{parallelcondition}, as just proved in Theorem \ref{thm stability dependence on s}. We need the following lemma.

\begin{lm} \label{lemma seminorm asympt}
Let $\Om$ be a bounded domain with boundary $\Ga$ of class $C^2$ and set $r=\min(\fkr_\Om, \fkR_\Om)$. Let $u$ be
of class $C^2$ in a neighborhood of $\Ga$ and such that $u=0$ on $\Ga$. 
\par
Then
\begin{equation}
\label{seminorms}
[u]_{\Ga^\de}\le \frac{\de}{1-\de/r}\,[u_\nu]_{\Ga}
+\int_0^\de \frac{(\de-t)(r-t)}{r-\de}\,[u_{\nu\nu}]_{\Ga^t}\,dt,
\end{equation}
for every $\de\in [0,r)$.
\par
In particular, if $\de\le r/2$, we have that
\begin{equation}
\label{seminorms2}
[u]_{\Ga^\de}\le 2\,\de\,\left\{[u_\nu]_{\Ga}
+\int_0^\de [u_{\nu\nu}]_{\Ga^t}\,dt\right\}.
\end{equation}
\end{lm}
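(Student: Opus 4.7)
The strategy is to reduce \eqref{seminorms} to a one-dimensional Taylor expansion along inward normal rays to $\Ga$, and then control how the parallel map $p\mapsto p^t:=p+t\,\nu(p)$ distorts Euclidean distances. Given $x,y\in\Ga^\de$ with $\de<r\le\fkr_\Om$, each admits a unique foot $p,q\in\Ga$ so that $x=p^\de$, $y=q^\de$. Setting $\phi_p(t):=u(p^t)$, the assumptions $u=0$ on $\Ga$ and $u\in C^2$ near $\Ga$ yield $\phi_p(0)=0$, $\phi_p'(0)=u_\nu(p)$, and $\phi_p''(t)=u_{\nu\nu}(p^t)$, since $\nu(p)$ is also the inward unit normal to $\Ga^t$ at $p^t$. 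Taylor's formula with integral remainder gives
\begin{equation*}
u(p^\de)=\de\,u_\nu(p)+\int_0^\de(\de-t)\,u_{\nu\nu}(p^t)\,dt.
\end{equation*}
Subtracting the analogous identity at $q$ and invoking the Lipschitz seminorms on $\Ga$ and $\Ga^t$ bounds $|u(x)-u(y)|$ by
\begin{equation*}
\de\,[u_\nu]_\Ga\,|p-q|+\int_0^\de(\de-t)\,[u_{\nu\nu}]_{\Ga^t}\,|p^t-q^t|\,dt.
\end{equation*}

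The heart of the argument is therefore the geometric comparison
\begin{equation*}
\frac{|p^t-q^t|}{r-t}\le\frac{|p^\de-q^\de|}{r-\de}\quad\text{for }0\le t\le\de<r,
\end{equation*}
i.e.\ monotonicity of $t\mapsto|p^t-q^t|/(r-t)$. Introducing $\tilde p:=p+r\,\nu(p)$ and $\tilde q:=q+r\,\nu(q)$, the convex-combination identity $p^t=(1-t/r)\,p+(t/r)\,\tilde p$ (and its counterpart for $q^t$) leads, by a direct computation, to
\begin{equation*}
\frac{d}{dt}\left|\frac{p^t-q^t}{r-t}\right|^2=\frac{2}{r(r-t)^3}\bigl[(r-t)\,(p-q)\cdot(\tilde p-\tilde q)+t\,|\tilde p-\tilde q|^2\bigr],
\end{equation*}
so monotonicity reduces to proving $(p-q)\cdot(\tilde p-\tilde q)\ge 0$. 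This is the analytic face of the interior ball condition at radius $r\le\fkr_\Om$: since $B(\tilde p,r)\subset\Om$ and $q\in\Ga$, the inequality $|q-\tilde p|^2\ge r^2$ expands to $(q-p)\cdot\nu(p)\le|q-p|^2/(2r)$; symmetrizing in $p$ and $q$ produces $(p-q)\cdot(\nu(p)-\nu(q))\ge-|p-q|^2/r$, which is precisely $(p-q)\cdot(\tilde p-\tilde q)\ge 0$.

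Substituting the geometric bound (and its $t=0$ instance $|p-q|/|p^\de-q^\de|\le r/(r-\de)$) into the estimate above yields \eqref{seminorms}. Inequality \eqref{seminorms2} is then immediate: for $\de\le r/2$ one has $\de/(1-\de/r)\le 2\de$ and $(r-t)(\de-t)/(r-\de)\le 2(\de-t)\le 2\de$, whence pulling $2\de$ outside reproduces the stated bound. The main obstacle is the geometric monotonicity: the clean convex-combination identity between $p$ and $\tilde p$ converts it into a familiar interior-ball inequality, but a naive approach via the shape operator on $\Ga$ would control only intrinsic distances on $\Ga^t$ and lead to a strictly weaker constant.
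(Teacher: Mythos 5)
Your proof is correct and follows the same overall plan as the paper's: a Taylor expansion of $u$ along inward normal rays, followed by control of the distortion of the parallel map $p\mapsto p^t$. The paper, however, only states the bound \eqref{p1p2} at the level $t=0$ (that is, $|p_1^\de-p_2^\de|\ge(1-\de/r)|p_1-p_2|$ via a triangle inequality and a Lipschitz bound on the Gauss map) and then writes the final display "using \eqref{p1p2}", leaving implicit the inequality $|p_1^t-p_2^t|/(r-t)\le|p_1^\de-p_2^\de|/(r-\de)$ that is actually needed under the integral sign. You make this step explicit: the convex-combination identity $p^t=(1-t/r)\,p+(t/r)\,\tilde p$ reduces the monotonicity of $t\mapsto|p^t-q^t|/(r-t)$ to the sign of $(p-q)\cdot(\tilde p-\tilde q)$, which you correctly obtain from the interior ball condition (and, notably, from $r\le\fkr_\Om$ alone — the exterior ball radius $\fkR_\Om$ is never used). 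So your route is essentially the paper's, carried out with a cleaner and slightly sharper geometric lemma, and fills in a detail the paper takes for granted.
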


\begin{proof}
Let $p_1$ and $p_2$ be two points on $\Ga$, so that $p_i^\de=p_i+\de\,\nu(p_i)$, $i=1, 2$,
are points on $\Ga^\de$. It is clear that $|p_1^\de-p_2^\de|\ge |p_1-p_2|-\de\,|\nu(p_1)-\nu(p_2)|$
and hence:
\begin{equation}
\label{p1p2}
|p_1^\de-p_2^\de|\ge  (1-\de/r)\,|p_1-p_2|.
\end{equation}
\par
By applying Taylor's formula to the values of $u$ at $p_1^\de$ and $p_2^\de$ and taking the difference, we have that
\begin{multline*} \label{u k 2}
u(p_1^\de)-u(p_2^\de) = \de\,[u_\nu(p_1)-u_\nu(p_2)] 
+ \int_0^\de (\de-t)\, [u_{\nu\nu}(p_1^t) - u_{\nu\nu}(p_2^t)]\,dt,
\end{multline*}
since $u=0$ at  $p_1$ and $p_2$ and being $\nu(p_i^t)=\nu(p_i)$, $i=1, 2$.
Dividing both sides by $|p_1^\de-p_2^\de|$ and using \eqref{p1p2}, gives that
\begin{multline*}
\frac{|u(p_1^\de)-u(p_2^\de)|}{|p_1^\de-p_2^\de|}\le \frac{\de}{1-\de/r}\,\frac{|u_\nu(p_1)-u_\nu(p_2)|}{|p_1-p_2|}
+\\
\int_0^\de \frac{(\de-t)(r-t)}{r-\de}\,
\frac{|u_{\nu\nu}(p_1^t) - u_{\nu\nu}(p_2^t)|}{|p_1^t-p_2^t |}\,dt\le \\
\frac{\de}{1-\de/r}\,[u_\nu]_{\Ga}
+\int_0^\de \frac{(\de-t)(r-t)}{r-\de}\,[u_{\nu\nu}]_{\Ga^t}\,dt.
\end{multline*}
\par
Therefore, \eqref{seminorms} and hence \eqref{seminorms2} follow at once.
%
\end{proof}

We are now in position to prove both symmetry and stability for Serrin's problem.
Of course, stability implies symmetry, when the normal derivative of $u$ is exactly constant on $\Ga$.
However, we prefer to present the two results separately.

\begin{theorem}[Symmetry] \label{thm Serrin by stability}
Let $\Om\subset\RN$ be a bounded domain with boundary $\Ga$ of class $C^{2,\tau}$, $0<\tau<1$. Let $u\in C^{2,\tau}(\ovr\Om)$,  satisfy \eqref{serrin1} and suppose that \eqref{serrin2} holds with $\fka>0$.
\par
Then $\Om$ is a ball.
\end{theorem}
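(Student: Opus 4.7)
The plan is to bootstrap Theorem \ref{thm stability dependence on s}. That theorem yields $r_e-r_i\le C\,\de^{-1}\,[u]_{\Ga^\de}$ for every small $\de>0$, so it suffices to show that Serrin's overdetermined condition \eqref{serrin2} forces $[u]_{\Ga^\de}=O(\de^{1+\tau})$ as $\de\to 0^+$. Multiplying these two bounds gives $r_e-r_i\le C'\,\de^\tau$ for all $\de\in(0,\de_0]$, and letting $\de\to 0^+$ forces $r_e=r_i$. Thus $\Om$ must be a ball.

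For the key asymptotic $[u]_{\Ga^\de}=O(\de^{1+\tau})$, I would compute the tangential gradient of $u$ on $\Ga^\de$ via Taylor expansion along the normal trajectory $p^t=p+t\,\nu(p)$. Since $u=0$ on $\Ga$ and $u_\nu=\fka$ on $\Ga$, the tangential part of $\na u$ vanishes on $\Ga$ and we have $\na u(p)=\fka\,\nu(p)$, whence
$$
\na u(p^\de)=\fka\,\nu(p)+\int_0^\de D^2u(p^t)\,\nu(p)\,dt.
$$
Because $\nu(p)$ is the unit normal to $\Ga^\de$ at $p^\de$ (parallel surface, $\de<\fkr_\Om$), the tangential gradient on $\Ga^\de$ equals $\na_T u(p^\de)=\int_0^\de g(t)\,dt$, where $g(t)$ is the component of $D^2u(p^t)\,\nu(p)$ orthogonal to $\nu(p)$. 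The crucial point is \emph{$g(0)=0$}: differentiating the boundary identity $u_\nu=\fka$ along any tangent vector $T$ to $\Ga$ gives $(D^2u)(T,\nu)+\na u\cdot D_T\nu=0$; on $\Ga$ the second addend vanishes because $\na u|_\Ga=\fka\,\nu$ while $\nu\cdot D_T\nu=0$, so $(D^2u)(T,\nu)=0$ on $\Ga$ for every tangent $T$, i.e., $D^2u(p)\,\nu(p)$ is parallel to $\nu(p)$ on $\Ga$.

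Since $u\in C^{2,\tau}(\ovr\Om)$, the map $t\mapsto g(t)$ is $\tau$-H\"older continuous, and $g(0)=0$ yields $|g(t)|\le [D^2 u]_{C^{0,\tau}}\,t^\tau$. Hence $|\na_T u(p^\de)|\le C\,\de^{1+\tau}$ uniformly in $p\in\Ga$. A standard bi-Lipschitz comparison between Euclidean and intrinsic distance on the $C^{2,\tau}$-regular parallel surface $\Ga^\de$ converts this into $[u]_{\Ga^\de}\le C'\,\de^{1+\tau}$ for every $\de$ small enough. In particular $[u]_{\Ga^\de}\to 0$, so one can choose $\de_0\in(0,\fkr_\Om/4)$ so small that the smallness condition \eqref{seminorm-ep} of Theorem \ref{thm stability dependence on s} holds. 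That theorem then yields concentric balls $B_{r_i}\subset\Om\subset B_{r_e}$ with $r_e-r_i\le C\,\de^{-1}\,[u]_{\Ga^\de}\le C\,C'\,\de^\tau$ for every $\de\in(0,\de_0]$; letting $\de\to 0^+$ gives $r_e=r_i$, so $\Om$ is a ball.

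The main obstacle is the vanishing $g(0)=0$. It is precisely here that both boundary conditions $u|_\Ga=0$ and $u_\nu|_\Ga=\fka$ enter jointly to produce the extra factor $\de^\tau$ beyond the naive bound $O(\de)$; without this cancellation the $\de^{-1}$ blow-up of the constant in Theorem \ref{thm stability dependence on s} could not be absorbed, and the argument would fail. One could alternatively try to squeeze this estimate out of Lemma \ref{lemma seminorm asympt}, but since $u_{\nu\nu}$ is only $\tau$-H\"older and not Lipschitz on $\Ga^t$, the Lipschitz seminorm $[u_{\nu\nu}]_{\Ga^t}$ is in general infinite, so the tangential-gradient route above is the natural one.
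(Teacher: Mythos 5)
Your proof is correct and reaches the same final reduction as the paper's: once $[u]_{\Ga^\de}=O(\de^{1+\tau})$ is established, apply Theorem \ref{thm stability dependence on s} and let $\de\to 0^+$. The genuine difference lies in how you obtain the asymptotic $[u]_{\Ga^\de}=O(\de^{1+\tau})$. The paper feeds Serrin's condition $[u_\nu]_\Ga=0$ into Lemma \ref{lemma seminorm asympt} and then invokes the claim $[u_{\nu\nu}]_{\Ga^t}=O(t^{\tau-1})$, so that $\int_0^\de[u_{\nu\nu}]_{\Ga^t}\,dt=O(\de^\tau)$. You instead estimate the tangential gradient $\na_T u$ pointwise on $\Ga^\de$ by integrating $D^2u\cdot\nu$ along normal rays, using the identity that the tangential part of $D^2u\cdot\nu$ vanishes on $\Ga$ (your $g(0)=0$, which is where the two boundary conditions act jointly); the $C^{0,\tau}$ modulus of $D^2u$ then gives $\sup_{\Ga^\de}|\na_Tu|=O(\de^{1+\tau})$, and the passage to the extrinsic seminorm $[u]_{\Ga^\de}$ is a routine intrinsic-versus-Euclidean distance comparison on the parallel surfaces, uniform for $\de<\fkr_\Om/2$.

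What your route buys is self-containedness under the stated hypotheses. You are right to be uneasy about the step $[u_{\nu\nu}]_{\Ga^t}=O(t^{\tau-1})$: the bracket $[\cdot]_{\Ga^t}$ is a \emph{Lipschitz} seminorm, while $u_{\nu\nu}$ is only assumed $\tau$-H\"older, so its finiteness on $\Ga^t$ is not automatic. It can be justified by interior estimates (roughly $\|D^3u\|_{L^\infty}\lesssim t^{\tau-1}[D^2u]_{C^{0,\tau}(\ovr\Om)}$ at distance $t$ from $\Ga$) whenever the equation bootstraps $u$ to $C^3_{\mathrm{loc}}(\Om)$ --- for instance when $f$ is constant or $C^{0,\tau}$ --- but under the paper's blanket ``$f$ locally Lipschitz'' this step deserves comment, since Lipschitz data does not give $C^{2,1}$ interior regularity. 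Your tangential-gradient computation sidesteps the issue entirely, needing only $D^2u\in C^{0,\tau}(\ovr\Om)$, while implementing the very same cancellation (both boundary conditions together annihilate the $O(\de)$ term, leaving the H\"older remainder $O(\de^{1+\tau})$). In this sense it is a cleaner, and arguably more rigorous, presentation of the same idea.
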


\begin{proof}
The assumed regularity of $u$ implies that $[u_{\nu\nu}]_{\Ga^t}=O(t^{\tau-1})$ as $t\to 0$
and hence,
since \eqref{serrin2} is in force, \eqref{seminorms2} tells us that \eqref{seminorm-ep} holds for some $\de_0>0$. Thus, Theorem \ref{thm stability dependence on s} can be applied and, by \eqref{seminorms2}, we have that
$$
B_{r_i}\subset \Om \subset B_{r_e} \ \mbox{ and } \ r_e-r_i\le 2 C \int_0^{\de} [u_{\nu\nu}]_{\Ga^t}\,dt
$$
for any $\de\in(0,\de_0)$. The behavior of $[u_{\nu\nu}]_{\Ga^t}$ as $t\to 0$ then implies that the integral at the right-hand side can be made
arbitrarily small. Therefore, $r_e=r_i$, that implies that $\Om$ is a ball.
\end{proof}

\begin{theorem}[Stability] \label{thm stability by stability}
Let $\Om\subset\RN$ be a bounded domain with boundary $\Ga$ of class $C^{2,\tau}$, $0<\tau<1$, and let $u\in C^{2,\tau}(\ovr\Om)$ be solution of \eqref{serrin1}.
Let $C$ be the constant in \eqref{stability t}.
\par
There are two concentric balls $B_{r_i}$ and $B_{r_e}$ such that \eqref{Bri-Omega-Bre}
holds with
\begin{equation} 
\label{stability Serrin}
r_e-r_i\le 2C\, [u_\nu]_{\Ga}.
\end{equation}
\par
In particular, if \eqref{serrin2} is in force with $\fka>0$, then $\Om$ is a ball.
\end{theorem}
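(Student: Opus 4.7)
My plan is to combine the parallel-surface stability estimate \eqref{stability t} of Theorem \ref{thm stability dependence on s} with the Taylor-expansion bound \eqref{seminorms2} of Lemma \ref{lemma seminorm asympt}, and then pass to the limit $\de\to 0^+$. The key preliminary observation is that the $C^{2,\tau}(\ovr\Om)$ regularity of $u$, together with $u=0$ on $\Ga$, gives $[u_{\nu\nu}]_{\Ga^t}=O(t^{\tau-1})$ as $t\to 0^+$, so that $\int_0^\de [u_{\nu\nu}]_{\Ga^t}\,dt\to 0$ as $\de\to 0^+$ (the exponent $\tau-1>-1$ makes the integral convergent).

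I would first dispose of the trivial regime, in which $[u_\nu]_\Ga$ is so large that the conclusion $r_e-r_i\le 2C\,[u_\nu]_\Ga$ already follows from the crude geometric bound $r_e-r_i\le \fkd_\Om$ (with $B_{r_i}$ any ball of radius $\fkr_\Om$ inscribed in $\Om$ and $B_{r_e}$ any ball of radius at most $\fkd_\Om$ circumscribed about it), upon possibly replacing $C$ by a larger constant with the same dependencies. In the complementary regime, $[u_\nu]_\Ga$ is small, and combined with \eqref{seminorms2} and the preliminary observation this guarantees that the smallness hypothesis $[u]_{\Ga^{\de_0}}\le \veps$ of Theorem \ref{thm stability dependence on s} is met.

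Then I would apply Theorem \ref{thm stability dependence on s} to obtain a single pair of concentric balls $B_{r_i}\subset \Om\subset B_{r_e}$, independent of $\de$, for which, for every $\de\in (0,\de_0]$,
\begin{equation*}
r_e-r_i\le C\,\de^{-1}\,[u]_{\Ga^\de}\le 2C\,[u_\nu]_\Ga+2C\int_0^\de [u_{\nu\nu}]_{\Ga^t}\,dt,
\end{equation*}
where the second inequality is \eqref{seminorms2}. Since the left-hand side does not depend on $\de$, I would send $\de\to 0^+$ and use the preliminary observation to kill the integral, which yields \eqref{stability Serrin}. The final assertion is then immediate: if $u_\nu\equiv \fka$ on $\Ga$, then $[u_\nu]_\Ga=0$ and \eqref{stability Serrin} forces $r_e=r_i$, so $\Om$ is a ball.

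The main point requiring care, and the \emph{raison d'\^etre} of the whole strategy, is that the pair $B_{r_i},B_{r_e}$ produced by Theorem \ref{thm stability dependence on s} must be the same for every $\de\in (0,\de_0]$; this uniformity is what legitimises the limit $\de\to 0^+$ and is also precisely the place where the optimal $\de^{-1}$ dependence of Theorem \ref{thm stability dependence on s} (rather than an exponential or H\"older blow-up) becomes essential, since anything worse would not be absorbed by the factor $\de$ coming from \eqref{seminorms2}. Fortunately this uniformity is built into the statement of Theorem \ref{thm stability dependence on s}: the approximate centre $\fkzero$ is constructed from $N$ mutually orthogonal critical hyperplanes attached to $\Om$ alone and does not see $\de$.
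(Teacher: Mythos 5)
Your proposal is correct and follows essentially the same route as the paper: apply Theorem \ref{thm stability dependence on s} to get a $\de$-independent pair of concentric balls, feed \eqref{seminorms2} into \eqref{stability t}, and send $\de\to 0^+$ using $[u_{\nu\nu}]_{\Ga^t}=O(t^{\tau-1})$. The only superfluous element is the preliminary case split on the size of $[u_\nu]_\Ga$: since \eqref{seminorms2} carries an overall factor of $\de$, one has $[u]_{\Ga^\de}\to 0$ as $\de\to 0^+$ for any $u\in C^{2,\tau}(\ovr\Om)$ vanishing on $\Ga$, irrespective of how large $[u_\nu]_\Ga$ may be, so hypothesis \eqref{seminorm-ep} is always met by a suitable $\de_0$; this is precisely why the paper (cf.\ the remark following the theorem) needs no smallness assumption on $[u_\nu]_\Ga$ and can keep $C$ exactly the constant of \eqref{stability t}, whereas your ``trivial regime'' branch would enlarge it.
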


\begin{proof}
The regularity of $u$ and \eqref{seminorms2} imply that \eqref{seminorm-ep} holds for some $\de_0>0$. Thus, Theorem \ref{thm stability dependence on s} can be applied and, by \eqref{seminorms2}, we have that \eqref{Bri-Omega-Bre} holds with
$$
r_e-r_i\le 2 C \left\{ [u_\nu]_\Ga+\int_0^{\de} [u_{\nu\nu}]_{\Ga^t}\,dt\right\},
$$
for every $\de\in (0,\de_0)$; \eqref{stability Serrin} then follows by letting $\de$ tend to $0$.
\end{proof}

\begin{remark}
We notice that in Theorem \ref{thm stability by stability} we are not assuming the smallness of $[u_\nu]_{\Ga}$ to prove \eqref{stability Serrin}.
\end{remark}

\section*{Acknowledgements} 
The authors have been supported by the Gruppo Nazionale per l'Analisi Matematica, la Probabilit\`a e le loro
Applicazioni (GNAMPA) of the Istituto Nazionale di Alta Matematica (INdAM). 

The paper was completed while the first author was visiting \lq\lq The Institute for Computational Engineering and Sciences\rq\rq (ICES) of The University of Texas at Austin, and he wishes to thank the institute for the hospitality and support. The first author has been also supported by the NSF-DMS Grant 1361122 and the project FIRB 2013 ``Geometrical and Qualitative aspects of PDE''.

\end{document}